\documentclass[twoside,11pt]{article}

%

%
%
%

\usepackage[preprint]{jmlr2e}

\usepackage[utf8]{inputenc} 
\usepackage[T1]{fontenc}    
\usepackage{hyperref}       
\usepackage{url}            
\usepackage{booktabs}       
\usepackage{amsfonts}       
\usepackage{nicefrac}       
\usepackage{microtype}      
\usepackage{subcaption}
\usepackage{graphicx}
\usepackage{float}
\usepackage{amsmath,amssymb}
\usepackage{mathptmx}      
\usepackage{times}
\usepackage{mathtools}
\usepackage{epstopdf}
\usepackage{natbib}
\usepackage{hyperref}
\hypersetup{
    colorlinks=true,
    linkcolor=blue,
    filecolor=blue,      
    urlcolor=blue,
    citecolor=blue,
}
\usepackage{bbm}
\usepackage{algorithmic}
\usepackage{algorithm}
\usepackage{color}
\usepackage[title]{appendix}
\usepackage{caption} 
\captionsetup[table]{skip=10pt}

\graphicspath{ {./images/} }

\newcommand{\Rb}{\mathbbm{R}}      

\newcommand{\Ac}{\mathcal{A}}

\newcommand{\Eb}{\mathbbm{E}}
\newcommand{\Fc}{\mathcal{F}}

\newcommand{\Kc}{\mathcal{K}}
\newcommand{\Lc}{\mathcal{L}}

\newcommand{\Sc}{\mathcal{S}}

\newcommand{\Zc}{\mathcal{Z}}
\newcommand{\1}{\mathbbm{1}}

\newcommand{\argmin}{\mathop{\rm argmin}}

\newcommand{\dist}{\mathop{\rm dist}}
\newcommand{\proj}{{\rm Proj}}

\newcommand*{\dt}[1]{\overset{\hbox{\tiny${\;\,}_\bullet$}}{#1}}

\newcommand{\Pbb}{\mathbb{P}}

\newenvironment{tightitemize}{%
    \list{{\textup{$\bullet$}}}{\settowidth\labelwidth{{\textup{\qquad}}}
    \leftmargin\labelwidth \advance\leftmargin\labelsep
    \parsep 0pt plus 1pt minus 1pt \topsep 3pt \itemsep 3pt
    }}{\endlist}

\newenvironment{tightlist}[1]{%
    \list{{\textup{(\roman{enumi})}}}{\settowidth\labelwidth{{\textup{(#1)}}}
    \leftmargin 0pt \advance\leftmargin\labelsep \itemindent \parindent
    \parsep 0pt plus 1pt minus 1pt \topsep 0pt \itemsep 0pt
    \usecounter{enumi}}}{\endlist}

\def\mg#1{\textcolor{black}{#1}}
\usepackage{soul}

\usepackage{soul}
\usepackage[normalem]{ulem}

\setlength{\marginparwidth}{2cm}
\usepackage{todonotes}



\jmlrheading{1}{2020}{1-23}{06/08}{--/--}{tbd}{Mert G\"urb\"uzbalaban, Andrzej Ruszczy{\'n}ski and Landi Zhu}


\ShortHeadings{A Robust Stochastic Subgradient Method}{G\"urb\"uzbalaban, Ruszczy{\'n}ski and Zhu}
\firstpageno{1}

\begin{document}

\title{A Stochastic Subgradient Method for\\ Distributionally Robust Non-Convex Learning 
}

\author{\name Mert G\"{u}rb\"{u}zbalaban$^\text{*}$ \email mg1366@rutgers.edu 
\AND
\name Andrzej Ruszczy{\'n}ski$^\text{*}$ \email rusz@rutgers.edu
       \AND
       \name  Landi Zhu$^\text{*}$ \email lz401@scarletmail.rutgers.edu\\
      \addr Department of Management Science and Information Systems\\ Rutgers Business School, Piscataway, USA. \\
      *The authors are in alphabetical order. 
      }

\editor{TBD}

\maketitle

\begin{abstract} We consider a distributionally robust formulation of stochastic optimization problems arising in statistical learning, where robustness is with respect to uncertainty in the underlying data distribution. Our formulation builds on risk-averse optimization techniques and the theory of coherent risk measures. It uses semi-deviation risk for quantifying uncertainty, allowing us to compute solutions that are robust against perturbations in the population data distribution. { We consider a broad class of generalized differentiable loss functions that can be non-convex and non-smooth, involving upward and downward cusps, and we develop an efficient stochastic subgradient method for distributionally robust problems with such functions. We prove that it converges to a point satisfying the optimality conditions. To our knowledge, this is the first method with rigorous convergence guarantees in the context of generalized differentiable non-convex and non-smooth distributionally robust stochastic optimization. Our method allows for control of the desired level of robustness with little extra computational cost compared to population risk minimization \mg{with stochastic gradient methods}.} We also illustrate the performance of our algorithm on real datasets arising in convex and non-convex supervised learning problems. 
\end{abstract}

\section{Introduction}

Statistical learning theory deals with the problem of making predictions and constructing models from a set of data. A
typical statistical learning problem can be formulated as a stochastic optimization problem:
\begin{equation} \min_{x\in X} \mathbb{E}_{D\sim \mathbb{P}} \left[\ell(x,D)\right],
\label{pbm-stoc-opt}
\end{equation}
where $\ell:\Rb^n\times \Rb^d\to \Rb$ is the loss function of the predictor $x$ on the random data $D$ with an unknown distribution with probability law $\mathbb{P}$, and $X\subset\mathbb{R}^n$ is the feasible set (see, \emph{e.g.}, \citep{vapnik2013nature}). We consider loss functions that can be non-convex or non-differentiable (non-smooth). This framework includes a large class of problems in supervised learning including deep learning, linear and non-linear regression and classification tasks \citep{shalev2014understanding}.

A central problem in statistics is to make decisions that generalize well (i.e. work well on unseen data) as well as decisions that are robust to perturbations in the underlying data distribution \citep{DASZYKOWSKI2007203}. Indeed, the statistical properties of the input data may be subject to some variations and distributional shifts and a major goal is to build models that are not too sensitive to small changes in the input data distribution. This motivates the following distributionally robust version of the problem \eqref{pbm-stoc-opt}:
\begin{equation}
\min_{x\in X} \max_{\mathbb{Q}\in \mathcal{M}(\mathbb{P})} \mathbb{E}_{D\sim \mathbb{Q}} \left[\ell(x,D)\right],
\label{pbm-robust-stoc-opt}
\end{equation}
where $\mathcal{M}(\mathbb{P})$ is a weakly closed convex set of probability measures that models perturbations to the law $\mathbb{P}$, and the predictor $x$ is chosen to accommodate worst-case perturbations. 
References \citep{BAKER2008253,seidman2020robust,madry2017towards} provide thorough discussion of the relevance of robustness in
statistical learning. Problem \eqref{pbm-robust-stoc-opt} is related to quantifying risk of the random data distribution \citep{gao2017wasserstein,robust-cvar}; its computational tractability depends on the underlying risk measure and the uncertainty set $\mathcal{M}(\mathbb{P})$ \citep{ruszczynski2006,duchi2018learning,esfahani2018data}. Existing approaches to modelling $\mathcal{M}(\mathbb{P})$ include conditional value at risk \citep{robust-cvar}, $f$-divergence based sets \citep{duchi2018learning}, Wasserstein balls around $\mathbb{P}$ \citep{sinha2017certifying,gao2016distributionally}, and other statistical distance based approaches (see, \emph{e.g.}, \citep{gao2016distributionally}). When $\ell$ is non-convex and non-differentiable, these formulations lead to non-convex min-max problems. To our knowledge,  none of the existing algorithms admit provable convergence guarantees to a stationary point of \eqref{pbm-robust-stoc-opt} in this general case. 
\cite{sinha2017certifying} consider the case when $\mathcal{M}(\mathbb{P})$ is defined as the $\rho$-neighborhood of the probability law $\mathbb{P}$ under the
Wasserstein metric, where $\rho$ is the desired level of robustness. The authors formulate a Lagrangian relaxation of this problem for a fixed penalty parameter $\gamma\geq 0$ and show that when the loss is smooth and the penalty parameter is large enough (or by duality if the desired level of robustness $\rho$ is small enough), the stochastic gradient descent (SGD) method achieves the rates of convergence that are achievable in standard smooth non-convex optimization. The authors also provide a data-dependent upper bound for the worst-case population
objective \eqref{pbm-robust-stoc-opt} for any robustness level $\rho$. Soma and Yoshida \citep{soma2020statistical} proposed a conditional value-at risk (CVaR) formulation for robustness and show that for convex and smooth losses their algorithm based on SGD has $\mathcal{O}(1/\sqrt{n})$-convergence to the optimal CVaR, where $n$ is the number of samples. For nonconvex and smooth loss functions, they also show a generalization bound on the CVaR. However, none of these guarantees apply if the loss is non-smooth.

{For some structured regression and classification problems of practical interest,   distributionally robust formulations that result in finite-dimensional convex programs are known \citep{robust-log-reg,esfahani2018data,mehrotra,kuhn2019wasserstein} to be solvable in polynomial time; see also the reference \citep{tractable-review} which contains a detailed list of tractable reformulations of distributionally robust constraints for several risk measures. For convex losses, conic interior point solvers or gradient descent with backtracking Armijo line-searches 
can also be used for solving a sample-based approximation of \eqref{pbm-robust-stoc-opt}, when $\mathcal{M}(\mathbb{P})$ is defined via the $f$-divergences \citep{duchi2018learning}. However, these approaches can be prohibitively expensive when the dimension or the number of samples are large}. {For smooth and convex losses, Namkoong and Duchi \citep{namkoong2016stochastic} showed that a sample-based approximation of \eqref{pbm-robust-stoc-opt} with $f$-divergences results in a min-max problem which can then be solved with a bandit mirror descent algorithm with number of iterations comparable to that of the SGD for solving the sample-based approximation of the problem \eqref{pbm-stoc-opt}. However, similar convergence guarantees for non-convex or non-smooth losses were not given.} {We also note that} there are data-driven distributionally robust stochastic optimization formulations (see, \emph{e.g.}, \citep{esfahani2018data,gao2017wasserstein,gao2016distributionally}) {which} replace the population measure $\mathbb{P}$ with an empirical measure $\mathbb{P}^{N}$ constructed from samples of input data. A disadvantage is that the resulting set $\mathcal{M}(\mathbb{P}^{N})$ becomes random.

We propose a new formulation of \eqref{pbm-robust-stoc-opt} based on the 
mean--semideviation measure of risk \citep{OR:1999,OR:2001}. We propose a specialized stochastic subgradient method for solving the resulting
problem, {which we call the \emph{single-time scale} (STS) method.} Our method has local convergence guarantees for a large class of possibly non-convex and non-smooth loss functions. 

\textbf{Modeling $\mathcal{M}(\mathbb{P})$ with mean semi-deviation risk.} 
Consider the random loss $Z = \ell(x,D)$ 
 defined on a sample space $\varOmega$ equipped with a sigma algebra $\Fc$. We assume $\mathbb{E}(Z)$ to be finite, \emph{i.e.}, $Z \in \Zc = \Lc_1(\varOmega,\Fc,\mathbb{P})$. The mean--semideviation risk measure is defined as follows:
\begin{equation}
\label{msd}
\rho[Z]  = \Eb[Z] + \varkappa\, \Eb\big[\max\big(0,Z - \Eb[Z]\big)\big],\qquad \varkappa \in [0,1].
\end{equation}
It is known to be a coherent measure of risk \citep{ADEH:1999} (see also \citep{follmer2011stochastic,shapiro2009lectures} and the references therein). In particular, it has the \emph{dual representation} \citep{RuSh:2006a},
\[
\rho[Z] = \max_{\mu \in \Ac} \int_\varOmega Z(\omega)\mu(\omega)\;\mathbb{P}(d\omega) =
{\max_{\mathbb{Q}~:~ \frac{d\mathbb{Q}}{d\mathbb{P}}\in \Ac} \int_\varOmega Z(\omega)\;\mathbb{Q}(d\omega)}=
\max_{{\mathbb{Q} ~:~ \frac{d\mathbb{Q}}{d\mathbb{P}}\in \Ac}}\Eb_\mathbb{{Q}}[Z],
\]
where $\Ac$ is a convex and closed set defined as  follows:
\begin{equation}
\Ac = \big\{ \mu= \1+ \xi - \Eb[\xi]: \ \xi\in \Lc_{\infty}(\varOmega,\Fc,\mathbb{P}), \  \|\xi\|_\infty \le \varkappa,\ \xi \ge 0 \big\}.
\notag
\end{equation}
This provides \eqref{msd} with robustness with respect to the probability distribution; the level of robustness is controlled by the parameter $\varkappa$.
{After plugging $Z=\ell(x,D)$ into this formulation,
we obtain}
\begin{equation}
\label{stoch_prob}
\min_{x\in X} \max_{\mathbb{Q}\in\mathcal{M}(\mathbb{P})}\Eb_\mathbb{Q}[\ell(x,D)] = \min_{x\in X}\;  \Eb\Big[\ell(x,D) + \varkappa \max\big(0,\ell(x,D) - \Eb[\ell(x,D)]\big)\Big],
\end{equation}
with the perturbation set
\begin{equation}
\mathcal{M}(\mathbb{P}) = \big\{ \mathbb{Q} : \frac{d\mathbb{Q}}{d\mathbb{P}} \in \mathcal{A}\big\}.
\label{eq-Mp}
\end{equation}
An advantage of the formulation \eqref{stoch_prob} is that the perturbation set is implied rather than defined with the use of a metric in the space of probability measures.

Problem \eqref{stoch_prob} can be cast in the following form
of a composition optimization problem:
\begin{equation}
\label{main_prob}
\min_{x\in X}\; f(x,h(x)),
\end{equation}
with the functions
\begin{align}
f(x,u) &= \Eb\Big[\ell(x,D) + \varkappa\max\big(0,\ell(x,D) - u \big)\Big], \label{f-def}\\
h(x) &= \Eb[\ell(x,D)]. \label{h-def}
\end{align}
 The main difficulty is that neither  values nor (sub)gradients of $f(\cdot)$, $h(\cdot)$, and of their composition are available.
Instead, we postulate access to their random estimates. Such estimates, however, may be biased, because estimating a
(sub)gradient of the composition $F(x) = f(x,h(x))$ involves estimating $h(x)$. Although problem \eqref{main_prob} can be further rewritten in the standard format of composition optimization,
\begin{equation}
\label{composition}
\min_{x\in X} f(\bar{h}(x)),
\end{equation}
with $\bar{h}(x) = (x,h(x))$, but the more specific formulation \eqref{main_prob} allows us to derive a more efficient specialized method, because $x$ is observed.

%

 The research on composition optimization problems of form \eqref{composition}  started from
penalty functions for stochastic constraints and composite regression models in \citep[Ch. V.4]{ermoliev1976methods}.
An established approach was to use two-level stochastic
recursive algorithms with two stepsize sequences in different time scales:  a slower one for updating the
main decision variable $x$, and a faster one for filtering the value of the inner function $h$.
References \citep{wang2017stochastic,WaLiFa17,kalogerias2018recursive,yang2019multi} provide a detailed account of these techniques and existing results.

A Central Limit Theorem for stochastic versions of  problem \eqref{composition} has been established in  \citep{dentcheva2017statistical}.  Large deviation bounds for the empirical optimal value were derived in \citep{ermoliev2013sample}. A new single time-scale method for problem \eqref{composition} with continuously differentiable functions has been recently proposed in \citep{ghadimi2018single}. It has  the complexity of ${\cal O}(1/\epsilon^2)$ to obtain an $\varepsilon$-solution
 of the problem, the same as methods for one-level unconstrained stochastic optimization. However, the construction of the method
 and its analysis depend on the Lipschitz constants of the gradients of the functions involved. Our problem \eqref{main_prob}, unfortunately, involves a nonsmooth function $\max(\cdot,\cdot)$, and may also
 involve a nonsmooth (non-differentiable) loss function $\ell(\cdot,\cdot)$. Indeed, many key problems in machine learning involve non-convex and non-smooth loss functions. A prominent example is deep learning with ReLU activation functions (see \emph{e.g.} \citep{goodfellow2016deep}). There are many other statistical learning problems where the objective can be non-smooth and non-differentiable such as non-convex generalized linear models and non-convex regression and risk minimization (see \emph{e.g.} \citep{hastie2015statistical,nonconvex-learning,allen2016variance,JMLR:v11:teo10a}). The organic non-differentiability and non-convexity are additional challenges for the solution method.

\textbf{Contributions.} We propose to model the perturbation to input data distribution by mean-semi\-deviation risk, according to \eqref{eq-Mp}. Our formulation leads to the distributionally robust learning problem \eqref{stoch_prob} which has the advantage that it results in a convex optimization problem when the loss $\ell$ is convex, {in contrast to some alternative formulations which result in min-max optimization problems (see, \emph{e.g.}, \citep{robust-cvar,namkoong2016stochastic}}).
When the loss is non-convex and non-smooth, we can still find a stationary point to \eqref{stoch_prob}, by our novel single time-scale parameter-free stochastic subgradient method. We prove that it finds a stationary point of \eqref{stoch_prob} for a general class of loss functions that can be non-convex and non-differentiable. 
To our knowledge, out method is the first method with {probability one} convergence guarantees for solving a distributionally robust formulation of a population minimization problem, where the loss can be non-convex or non-differentiable.

{We also note that the computational cost of stochastic first-order optimization algorithms are typically measured in terms of the number of stochastic gradient or subgradient evaluations they require (see, \emph{e.g.}, \citep[Section 6]{bubeck}, \citep{jain2018accelerating,ghadimi2013optimal}). Standard SGD methods (which go back to Robbins and Monro's pioneering work \citep{robbins1951stochastic}) applied to the non-robust optimization problem \eqref{pbm-stoc-opt} can operate with one stochastic subgradient evaluation under similar assumptions to ours, however they are not applicable to the robust formulation \eqref{pbm-robust-stoc-opt} directly. In contrast, our method can converge to a stationary point of the robust formulation \eqref{pbm-robust-stoc-opt} with probability one requiring at most two stochastic subgradient evaluations at every iteration. Therefore, comparing the numbers of stochastic gradient evaluations, the computational cost of each iteration of our method is at most twice that of the standard SGD method, requiring little extra computational cost for computing robust solutions.} 


\section{The single time-scale (STS) method with subgradient averaging}
\label{s:3}
{
We present the method for problems of the form \eqref{stoch_prob}, in which the loss function $\ell(x,D)$
is differentiable in a generalized sense \citep{norkin1980generalized} with respect to $x$ and integrable with respect to $D$.
This broad class of functions is contained in the set of locally Lipschitz functions,
and contains all semismooth locally Lipschitz loss functions that can be non-convex and non-differentiable \citep{mifflin1977semismooth}. 
{We note that this class includes many of the losses arising in statistical learning problems, including population and empirical risk minimization with possibly non-convex and non-smooth regularizers \citep{hastie2015statistical,nonconvex-learning,allen2016variance,JMLR:v11:teo10a,vapnik2013nature}, weakly convex and continuous losses \citep{davis-dima,lee-weakly-convex} as well as deep learning with ReLU activations \citep{goodfellow2016deep}.}

Recall that the Clarke subdifferential $\partial_x\ell(x,D)$ is an inclusion-minimal generalized derivative of $\ell(\cdot,D)$ \citep{norkin1980generalized}.
 We make the following assumptions.

\begin{tightitemize}
\item[(A1)] The set $X\subset \Rb^n$ is convex and compact;
\item[(A2)] For almost every (a.e.) $\omega\in \Omega$, the function $\ell(\cdot,D(\omega))$ is differentiable in a generalized sense with
the subdifferential $\partial_x \ell(x,D(\omega))$, $x\in \Rb^n$.  Moreover, for every compact set $K \in \Rb^n$ an integrable function $L_K:
\varOmega\to \Rb$ exists, satisfying
$\sup_{x\in K}\sup_{g \in \partial \ell(x,D(\omega)) } \|g\| \le L_K(\omega)$.
\end{tightitemize}
Under (A2), the function \eqref{h-def} is also differentiable in a generalized sense. Although its generalized derivative is not readily available, we can draw $\widetilde{D}$ from the distribution of $D$
and use an element of $\partial_x\ell(x,\widetilde{D})$ as a \emph{stochastic subgradient}  (a random vector whose expected value is a subgradient).
Furthermore, the function \eqref{f-def} is also differentiable in a generalized sense with respect to $(x,u)$.
Its stochastic subgradient can be obtained as follows. First, we observe
$\ell(x,\widetilde{D})$ and choose
\[
\lambda \in  \begin{cases} \{0\} & \text{ if } \ell(x,\widetilde{D}) < u,\\
 [0,1] & \text{ if } \ell(x,\widetilde{D}) = u,\\
 \{1\} & \text{ if } \ell(x,\widetilde{D}) > u.
\end{cases}
\]
Then the vector $\begin{bmatrix} \tilde{g}_x \\ \tilde{g}_u \end{bmatrix}$, where
$\tilde{g}_x \in (1+ \lambda \varkappa)\partial_x\ell(x,\widetilde{D})$, $\tilde{g}_u = -\lambda\varkappa$,
is a stochastic subgradient of the function $f(x,u)$ which is defined by \eqref{f-def}. These formulas follow from calculus rules for generalized subdifferentials
of compositions \citep[Thm. 1.6]{mikhalevich1987nonconvex} and expected values \citep[Thm. 23.1]{mikhalevich1987nonconvex}.
We can also use different samples for calculating  stochastic subgradients of \eqref{f-def} and \eqref{h-def}.
}

The {STS} method generates three random sequences: approximate solutions $\{x^k\}$,
path-averaged stochastic subgradients $\{z^k\}$, and inner function estimates $\{u^k\}$, all defined
on a certain probability space $(\Omega,\Fc,P)$. We let $\Fc_k$ to be
the $\sigma$-algebra generated by
$\{x^0,\dots,x^k,z^0,\dots,z^k,u^0,\dots,u^k\}$.
Starting from the initialization
$x^0 \in X$, $z^0 \in \Rb^n$, $u^0 \in \Rb$, the method uses parameters  $a>0$, $b>0$ and ${c}>0$ to generate $x^k, z^k, u^k$ for $k>0$. 
At each iteration $k=0,1,2,\dots$, we compute
\begin{equation}
\label{QP}
y^k = \argmin_{y \in X}\  \left\{\langle z^k, y-x^k \rangle + \frac{{c}}{2} \|y-x^k\|^2\right\},
\end{equation}
and, with an $\Fc_k$-measurable stepsize $\tau_k \in \big(0,\min(1,1/a)\big]$, we set
\begin{equation}
\label{def_xk}
x^{k+1} = x^k + \tau_k (y^k-x^k).
\end{equation}
Then, we obtain statistical estimates:
\begin{tightitemize}
\item $\tilde{g}^{k+1}=\begin{bmatrix}\tilde{g}_x^{k+1} \\ \tilde{g}_u^{k+1}\end{bmatrix}$ of an element ${g}^{k+1}=\begin{bmatrix} g_x^{k+1}\\ g_u^{k+1}\end{bmatrix}\in \partial\!f(x^{k+1},u^k)$,
\item $\tilde{h}^{k+1}$  of $h(x^{k+1})$, and
\item $\tilde{J}^{\,k+1}$ of an element $J^{k+1}\in \partial h(x^{k+1})$ with the convention that $J^{k+1}$ is a row vector,
\end{tightitemize}
and we update the running averages as
\begin{align}
z^{k+1} &= z^k + a\tau_k \Big(\tilde{g}_x^{k+1}+\big[\tilde{J}^{\,k+1}\big]^\top \tilde{g}_u^{k+1} - z^k\Big),  \label{def_zk}\\
u^{k+1} &= u^k + \tau_k \tilde{J}^{\,k+1} (y^k-x^k) + b \tau_k \big(\tilde{h}^{k+1}-u^k\big).   \label{def_uk}
\end{align}
We assume the following conditions on the stepsizes and the stochastic estimates:
\begin{tightitemize}
\item[(A3)] $\tau_k \in \big(0,\min(1,1/a)\big]$ for all $k$, $\lim_{k\to\infty}\tau_k= 0$, $\sum_{k=0}^\infty \tau_k = \infty$,
$\sum_{k=0}^\infty \Eb[\tau_k^2] < \infty$;
\item[(A4)] For all $k$,
\begin{tightlist}{iii}
\item
$\tilde{g}^{k+1} = g^{k+1} + e_g^{k+1} + \delta_g^{k+1}$, with \\ $g^{k+1}\in \partial\!f(x^{k+1},u^k)$,
 $\Eb\big\{e_g^{k+1}\big|\Fc_k\big\} = 0$, $\Eb\big\{\|e_g^{k+1}\|^2|\Fc_k\big\}\le \sigma_g^2$, \\$\lim_{k\to \infty} \delta_g^{k+1}=0$,
\item $\tilde{h}^{k+1} = h(x^{k+1}) + e_h^{k+1} + \delta_h^{k+1}$, with\\
$\Eb\big\{e_h^{k+1}\big|\Fc_k\big\} = 0$,  $\Eb\big\{\|e_h^{k+1}\|^2|\Fc_k\big\}\le \sigma_h^2$, $\lim_{k\to \infty} \delta_h^{k+1}=0$,
\item $\tilde{J}^{\,k+1} = J^{k+1} + E^{k+1} + \Delta^{k+1}$,with \\
$J^{k+1}\in \partial h(x^{k+1})$,
$\Eb\big\{E^{k+1}\big|\Fc_k\big\} = 0$, $\Eb\big\{\|E^{k+1}\|^2|\Fc_k\big\}\le \sigma_E^2$,
\end{tightlist}
and $e_g^{k+1}$ and $E^{k+1}$ are statistically independent, given $\Fc_k$.
\end{tightitemize}
These assumptions are pretty standard in the study of stochastic gradient and stochastic approximation methods \citep{kushner2003stochastic}. As discussed before, the stochastic estimates satisfying these conditions can be obtained by drawing at each iteration one or two independent samples: $D_1^{k+1}$ and $D_2^{k+1}$, from the data. Then we can take
\begin{align*}
\tilde{g}_x^{k+1} &\in \begin{cases} \partial_x \ell(x^{k+1}, D_1^{k+1}) & \text{if } \ell(x^{k+1}, D_1^{k+1}) < u^k,\\
                                    (1+\varkappa) \partial_x \ell(x^{k+1}, D_1^{k+1}) & \text{if } \ell(x^{k+1}, D_1^{k+1}) \ge u^k,
                                    \end{cases}\\
\tilde{g}_u^{k+1} &=\begin{cases} 0 & \text{if } \ell(x^{k+1}, D_1^{k+1}) < u^k,\\
                                   -\varkappa & \text{if } \ell(x^{k+1}, D_1^{k+1}) \ge u^k,
                                    \end{cases}\\
\tilde{h}^{k+1} &=  \ell(x^{k+1}, D_1^{k+1}),
\\
\tilde{J}^{\,k+1} &\in \begin{cases}
\big\{\tilde{g}_x^{k+1}\big\}  & \text{if } \ell(x^{k+1}, D_1^{k+1}) < u^k,\\
 \partial_x \ell(x^{k+1}, D_2^{k+1}) & \text{if } \ell(x^{k+1}, D_1^{k+1}) \ge u^k.
\end{cases}
\end{align*}
When $\ell(x^{k+1}, D_1^{k+1}) < u^k$, only one sample, $D_1^{k+1}$, is needed, because
$\big[\tilde{J}^{\,k+1}\big]^\top \tilde{g}_u^{k+1}=0$ in \eqref{def_zk} in this case. {In any case,  each iteration of our algorithm requires at most two stochastic subgradient evaluations.}

Our method refines and specializes the approach to multi-level stochastic optimization recently developed in \citep{ruszczynski2020stochastic}. { We extend this approach to a new case in which the upper level function function is not continuously differentiable and thus the
conditions of \citep{ruszczynski2020stochastic} are not satisfied. We establish the convergence in the new case as well, as detailed in the following section.}

\section{Convergence analysis}

To recall optimality conditions for problem \eqref{stoch_prob}, and analyze our method, we need to introduce  relevant multifunctions.
For a point $x\in \Rb^n$, we define the set:
\begin{equation}
\label{GF}
G_F(x) = {\rm conv} \big\{ s\in \Rb^{n}: s = g_x + J^\top g_u,\ g \in \partial\! f(x,h(x)), \ J \in \partial h(x) \big\}.
\end{equation}
By \citep[Thm. 1.6]{mikhalevich1987nonconvex}, 
the set $G_F(x)$ is a generalized subdifferential of the composition function $F(x)=f(x,h(x))$.
We call a point $x^*\in X$ \emph{stationary} for problem \eqref{stoch_prob}, if
\begin{equation}
\label{stationary}
0 \in  G_F(x^*) + N_X(x^*),
\end{equation}
where $N_X(x)$ is the normal cone to $X$ at $x$.
The set of stationary points
is denoted by $X^*$. We start from a useful property of the gap function {$\eta:X\times \Rb^n\to(-\infty,0]$},
\begin{equation}
\label{gap}
\eta(x,z) = \min_{y\in X} \left\{\langle z, y-x \rangle + \frac{{c}}{2} \|y-x\|^2\right\}.
\end{equation}
We denote the minimizer in \eqref{gap} by $\bar{y}(x,z)$. Since it is a projection of $x-z/{c}$ on $X$,
\begin{equation}
\label{opt-eta}
 \langle z ,\bar{y}(x,z)-x \rangle  + {c} \| \bar{y}(x,z)-x \|^2 \le 0.
\end{equation}
Moreover, a point $x^*\in X^*$ if and only if $z^*\in G_F(x^*)$ exists
 such that $\eta(x^*,z^*)=0$. Consider the multifunction $\varGamma: \Rb^n\times \Rb^n \times \Rb \rightrightarrows \Rb^n \times \Rb$:
\begin{equation}
\label{Gamma-def}
\begin{aligned}
\varGamma(x,z,u)=&\big\{(R,v): \exists g \in \partial f(x,u), \exists J \in \partial h(x), \\
&v=J  \big(\bar{y}(x,z)-x\big) + b(h(x)-u), \ R=a\big(g_{x}+J^\top g_{u}-z\big)\big\}.
\end{aligned}
\end{equation}

With this notation, we can write the updates \eqref{def_zk}--\eqref{def_uk} as follows:
\begin{equation}
\label{u-abstract}
\begin{bmatrix}
z^{k+1} \\ u^{k+1}
\end{bmatrix} \in  \begin{bmatrix} z^k \\ u^k \end{bmatrix}
+ \tau_k \varGamma(x^{k+1},z^k,u^k) + \tau_k \theta^{k+1} + \tau_k \alpha^{k+1},
\end{equation}
where, for some constant $C^{\theta}$,
\begin{equation}
\label{theta-abstract}
\Eb\big[\theta^{k+1}\,\big|\, \Fc_k\big]=0,  \quad \Eb\big[\|\theta^{k+1}\|^2\,\big|\,\Fc_k\big]\le C^{\theta}, \quad  k=0,1,\dots
\end{equation}
and
\begin{equation}
\label{alpha-abstract}
\lim_{k\to \infty} \alpha^{k+1} = 0.\quad \text{a.s.}.
\end{equation}
The verification of relations \eqref{u-abstract}--\eqref{alpha-abstract} is  straightforward from the
description of the algorithm and assumptions (A3)--(A4). Two technical results are needed for further analysis.

\begin{lemma}
\label{l:Gamma}
The multifunction $\varGamma$
is compact and convex valued.
\end{lemma}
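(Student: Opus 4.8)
The plan is to view $\varGamma(x,z,u)$, for fixed $(x,z,u)$, as the image of $\partial\! f(x,u)\times\partial h(x)$ under the map
\[
\Phi(g,J)=\Big(a\big(g_x+J^\top g_u-z\big),\ J\big(\bar{y}(x,z)-x\big)+b\big(h(x)-u\big)\Big),
\]
and to treat its two components separately. Under (A2) the functions $f$ and $h$ are locally Lipschitz and generalized differentiable, so $\partial\! f(x,u)$ and $\partial h(x)$ are nonempty, convex and compact, with the integrable bound of (A2) supplying boundedness. Since $\bar{y}(x,z)$ is the unique projection of $x-z/c$ on $X$ (hence continuous in its arguments) and $h(x)$ is finite, $\Phi$ is continuous and its domain is compact, so $\varGamma(x,z,u)=\Phi\big(\partial\! f(x,u)\times\partial h(x)\big)$ is compact. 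This settles compactness; the substance of the lemma is convexity.

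For convexity the difficulty is the bilinear term $J^\top g_u$ in \eqref{Gamma-def}: the second component of $\Phi$ is affine in $J$ and independent of $g$, and the first is affine in $g$ for fixed $J$ and affine in $J$ for fixed $g$, yet it is not jointly affine, so the image of a convex set is not convex a priori. (Note that if $\Eb$ places no mass on $\{\ell(x,D)=u\}$ then $g_u=-\varkappa\,\Eb[\lambda]$ is single-valued, $\Phi$ becomes genuinely affine, and convexity is immediate; the work is entirely in the degenerate case.) I would take $(R_i,v_i)=\Phi(g^i,J^i)$, $i=1,2$, and $t\in[0,1]$, set $\bar{J}=tJ^1+(1-t)J^2\in\partial h(x)$ and $\bar{g}=tg^1+(1-t)g^2\in\partial\! f(x,u)$, both admissible by convexity of the subdifferentials. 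Choosing $J=\bar{J}$ already matches the second coordinate of the target convex combination. A direct expansion shows that the first coordinate produced by $(\bar g,\bar J)$ misses the target by exactly
\[
\delta=t(1-t)\big(g_u^1-g_u^2\big)\big(J^1-J^2\big)^\top,
\]
so the task reduces to exhibiting some $g\in\partial\! f(x,u)$ whose linear image $g_x+\bar{J}^\top g_u$ equals that of $\bar g$ shifted by $\delta$, while $J=\bar J$ is kept fixed.

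Absorbing the residual $\delta$ is the step I expect to be the main obstacle, and I would attack it using the explicit structure of the subdifferentials rather than convexity alone. Representing a generic element of $\partial\! f(x,u)$ through an inner subgradient and a multiplier, $g_x=\Eb[(1+\lambda\varkappa)G]$ and $g_u=-\varkappa\,\Eb[\lambda]$ with $G(\cdot)\in\partial_x\ell(x,\cdot)$ and $\lambda(\cdot)\in[0,1]$, and writing $J=\Eb[\Gamma]$ with $\Gamma(\cdot)\in\partial_x\ell(x,\cdot)$, exhibits two features: the sign constraint $g_u\le 0$, and the \emph{linkage} that $J$ ranges over $\Eb[\partial_x\ell(x,\cdot)]=\partial h(x)$, i.e.\ over the same inner subgradients that generate $g_x$. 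I anticipate that this linkage, combined with $g_u\le 0$, is precisely what forces the extreme points of each fiber $\{g_x+\bar{J}^\top g_u:\ g\in\partial\! f(x,u)\}$ to be attained at the extreme (active) multiplier, so that $\varGamma(x,z,u)$ collapses to the image of a product of convex sets under a genuinely affine map, from which convexity follows; the single-sample case, where both fiber endpoints are realized at $\lambda\equiv1$, is the model for this reduction. The cleanest rigorous route, I expect, is a Lyapunov/Aumann convexity argument applied to the selections $(G,\lambda,\Gamma)$: the integral of the associated multifunction is convex, which furnishes exactly the freedom needed to re-select $\lambda$ and $G$ so that $g_x+\bar{J}^\top g_u$ reaches the shifted target while remaining in $\partial\! f(x,u)$, closing the proof.
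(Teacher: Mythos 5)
Your compactness argument is fine and agrees with the paper's (continuous image of the compact set $\partial\! f(x,u)\times\partial h(x)$), and you have correctly isolated where the substance lies: the bilinear cross term $J^\top g_u$, with the residual $\delta=t(1-t)\,(g_u^1-g_u^2)\,(J^1-J^2)^\top$ being exactly the obstruction to affineness. The gap is that you never absorb $\delta$. The whole convexity claim is deferred to an anticipated Lyapunov/Aumann selection argument (``I anticipate\dots'', ``I expect\dots'') that is not carried out, and that tool is ill-suited here for two reasons. First, Aumann-type convexity of set-valued integrals requires a non-atomic measure, whereas the formulation is explicitly meant to cover empirical risk, i.e.\ purely atomic $\Pbb$. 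Second, the quantity you must control, $J^\top g_u=-\varkappa\,\Eb[\lambda]\,\Eb[\Gamma]^\top$, is a \emph{product} of two integrals, not the integral of a single multifunction, so convexity of Aumann integrals does not transfer to it. Moreover, to cancel $\delta$ while holding $J=\bar J$ fixed you would need the fiber $\{g_x+\bar J^\top g_u:\ g\in\partial\! f(x,u)\}$ to move in the direction $J^1-J^2$, and nothing in your representation shows that this direction is attainable within that fiber.

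For comparison, the paper's route is more elementary and more explicit: it first computes the $u$-component of the subdifferential in closed form, $\partial_u f(x,u)=\varkappa\big[-\Pbb\{u\le \ell(x,D)\},\,-\Pbb\{u<\ell(x,D)\}\big]$, a scalar interval; it then fixes the $x$-component of $g$ and the element $J$ at their convex combinations (as you do) and adjusts \emph{only} the scalar $g_u$-component within this interval, invoking an intermediate-value argument to reproduce the cross term $\theta a_{1u}a_2+(1-\theta)b_{1u}b_2$ as $c_{1u}\big(\theta a_2+(1-\theta)b_2\big)$. Note that this matching equates a vector along $\bar J=\theta a_2+(1-\theta)b_2$ with a point offset in the direction $a_2-b_2$, so the step is delicate precisely where you predicted it would be; but a complete proof must exhibit the admissible selection explicitly, and your proposal, as written, conjectures it rather than constructs it.
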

\begin{proof}
By assumption, for a.e. $\omega \in \Omega$, the loss function $\ell(x,D(\omega))$ is generalized differentiable, and therefore the function $f(x,u)$ is also generalized differentiable where $\partial_x f(x,u)$, $\partial_u f(x,u)$ and $\partial h(x)$ are all convex and compact \citep{norkin1980generalized}.

Since the function $F(u,D(\omega))=\ell(x,D(\omega))+\varkappa \cdot \max(0,\ell(x,D(\omega)-u)$ {is generalized differentiable for a.e. $\omega$}, 
{by the interchangeability of the generalized subdifferential and integral operators \citep[Thm. 23.1]{mikhalevich1987nonconvex}, we obtain}:
\begin{equation}
\partial_u f (x,u)=\mathbb{E} [\partial_u F(u,D)].
\end{equation}
We also have
$$ \partial_uF(u,D)=\varkappa \cdot \left\{
\begin{aligned}
-1, &\ u<\ell(x,D), \\
[-1,0],&\ u=\ell(x,D), \\
0,&\ u>\ell(x,D),
\end{aligned}
\right.
$$
which implies
\begin{align*}
 \partial_u f(x,u)&= - \varkappa\, \Pbb\big\{u<\ell(x,D)\big\}+\varkappa \big[-\Pbb\big\{u=\ell(x,D)\big\},0\big]\\
&=\varkappa \big[-\Pbb\big\{u \le \ell(x,D)\big\}, -\Pbb\big\{u<\ell(x,D)\big\}\big].
\end{align*}

If we denote $\Pbb\big\{u\le \ell(x,D)\big\}$ and $\Pbb\big\{u<\ell(x,D)\big\}$ by $P_1$ and $P_2$ respectively, we obtain
$\partial_u f(x,u) = \varkappa \cdot [-P_1,-P_2]$.

Now, in order to prove that $\varGamma (x,z,u)$ is convex-valued, we choose two points in $\varGamma (x,z,u)$: $A=(R_a,v_{1a},v_{2a})$ and $B=(R_b,v_{1b},v_{2b})$. Since every point in $\varGamma (x,z,u)$ is generated by  a pair of $(g,J)$ from $\partial f(x,u) \times \partial h(x) $, we can also denote the pair generating the point $A$ by $(a_1,a_2)$, and the pair generating the point $B$ by $(b_1, b_2)$.

For every $\theta \in [0,1]$, the convex combination $(R^\theta, v_1^\theta, v_2^\theta)$ of $A$ and $B$ can be expressed as:
\begin{align*}
R^\theta &= a(\theta a_{1x} + (1-\theta)b_{1x} + \theta a_{1u}a_2 +(1-\theta)b_{1u}b_2-z),\\
v_1^\theta &= (\theta a_{1x} + (1-\theta)b_{1x})(\bar{y}(x,z)-x)+\theta a_{1u} a_2 +(1-\theta)b_{1u}b_2 + b(f(x,u_2)-u_1),\\
v_2^\theta &= \theta a_2 +(1-\theta)b_2.
\end{align*}
If we can always find a pair $(c_1,c_2) \in \partial f(x,u) \times \partial h(x)$ that generates this convex combination, then $\varGamma (x,z,u)$ is convex-valued.

First, since $\partial_x f(x,u)$ and $\partial h(x)$ are convex sets, we can choose $c_{1x}=\theta a_{1x} + (1-\theta)b_{1x}, c_2=\theta a_2 + (1-\theta)b_2$ (we do not choose $c_{1u}$ yet); then the corresponding point $C=(g_c, v_{1c}, v_{2c})$ is:
\begin{align*}
R_c \  &= a(\theta a_{1x} + (1-\theta)b_{1x} + c_{1u} (\theta a_2 +(1-\theta)b_2)-z),\\
v_{1c} &= (\theta a_{1x} + (1-\theta)b_{1x})(\bar{y}(x,z)-x)+c_{1u} (\theta a_2 +(1-\theta)b_2) + b(f(x,u_2)-u_1),\\
v_{2c} &= \theta a_2 +(1-\theta)b_2.
\end{align*}
Furthermore, we have $c_{1u} \in \partial_u f(x,u)=\varkappa \cdot [-P_1,-P_2]$, so for the common item $c_{1u} (\theta a_2 +(1-\theta)b_2)$ in $R_c$ and $v_{1c}$, any value between $-\varkappa P_1 (\theta a_2 +(1-\theta)b_2)$ and $-\varkappa P_2 (\theta a_2 +(1-\theta)b_2)$ can be achieved.

On the other hand, for the common item $\theta a_{1u}a_2 +(1-\theta)b_{1u}b_2$ in $R^\theta$ and $v_1^\theta$, since $a_{1u},b_{1u} \in \partial_u f(x,u)=\varkappa \cdot [-P_1,-P_2]$, we have:
\begin{equation}
-\varkappa P_1 (\theta a_2 +(1-\theta)b_2) \le \theta a_{1u}a_2 +(1-\theta)b_{1u}b_2 \le -\varkappa P_2 (\theta a_2 +(1-\theta)b_2),
\end{equation}
so there must exist $c_{1u}^* \in \partial_u f(x,u)$ that satisfies:
\begin{align*}
c_{1u}^* (\theta a_2 +(1-\theta)b_2) &= \theta a_{1u}a_2 +(1-\theta)b_{1u}b_2.
\end{align*}
This implies that for this value of $c_{1u}$,
\begin{align*}
(R^\theta, v_1^\theta, v_2^\theta) = (R_c, v_{1c}, v_{2c}),
\end{align*}
and we conclude that $\varGamma (x,z,u)$ is convex-valued. Furthermore, because $\partial_x f(x,u), \partial_u f(x,u)$ and $\partial h(x)$ are all compact, the set $\varGamma (x,z,u)$ is compact as well.
\end{proof}

\begin{lemma}
\label{l:bounded}
The sequences $\{z^k\}$ and $\{u^k\}$ are bounded with probability 1.
\end{lemma}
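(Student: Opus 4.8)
The plan is to treat the two recursions \eqref{def_zk} and \eqref{def_uk} separately, exploiting that each is a forgetting (exponentially-averaged) recursion driven by terms whose conditional means are \emph{uniformly} bounded and whose conditional second moments are bounded. First I would record that all primal iterates stay in the compact set: since $x^{k+1}=x^k+\tau_k(y^k-x^k)$ with $\tau_k\in(0,1]$, $x^k\in X$ and $y^k\in X$, convexity of $X$ from (A1) gives $x^k\in X$ for all $k$, so $\|y^k-x^k\|\le\operatorname{diam}(X)=:D_X<\infty$. The crucial structural observation is that the subgradient bounds for $f$ are independent of $u$: from the explicit form of $\partial\! f$, any $g\in\partial\! f(x,u)$ satisfies $\|g_x\|\le(1+\varkappa)\sup_{s\in\partial_x\ell(x,D)}\|s\|$ and $g_u\in[-\varkappa,0]$, uniformly in $u$, so by (A2) together with integrability the deterministic subgradients $g^{k+1}\in\partial\! f(x^{k+1},u^k)$ and $J^{k+1}\in\partial h(x^{k+1})$ are bounded by a constant $M$ on $X$, and $h$ is bounded on $X$. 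Hence, writing $w^{k+1}=\tilde g_x^{k+1}+[\tilde J^{k+1}]^\top\tilde g_u^{k+1}$, the decomposition (A4) yields $\|\Eb[w^{k+1}\mid\Fc_k]\|\le M_w$ (using the conditional independence of $e_g^{k+1}$ and $E^{k+1}$ to kill the cross term) and $\Eb[\|w^{k+1}\|^2\mid\Fc_k]\le C_w$, with $M_w,C_w$ deterministic constants that do \emph{not} depend on the a priori unbounded $z^k,u^k$. This decoupling is what makes the two recursions tractable.

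For $\{z^k\}$, I would rewrite \eqref{def_zk} as $z^{k+1}=(1-a\tau_k)z^k+a\tau_k w^{k+1}$; since $a\tau_k\in[0,1]$ by (A3) this is a convex combination. Split $z^k=p^k+q^k$ with $p^{k+1}=(1-a\tau_k)p^k+a\tau_k\,\Eb[w^{k+1}\mid\Fc_k]$, $p^0=z^0$, and $q^{k+1}=(1-a\tau_k)q^k+a\tau_k\big(w^{k+1}-\Eb[w^{k+1}\mid\Fc_k]\big)$, $q^0=0$. The signal part $p^k$ is a convex combination of $z^0$ and vectors of norm at most $M_w$, so $\|p^k\|\le\max(\|z^0\|,M_w)$ deterministically. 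For the martingale part, using $(1-a\tau_k)^2\le 1$ and the conditional mean-zero property, I get $\Eb[\|q^{k+1}\|^2\mid\Fc_k]\le\|q^k\|^2+a^2\tau_k^2 C_w$. Because $\sum_k\Eb[\tau_k^2]<\infty$ from (A3) forces $\sum_k\tau_k^2<\infty$ almost surely, the nonnegative process $\|q^k\|^2$ is an almost-supermartingale with summable increments, so by the Robbins--Siegmund / supermartingale convergence theorem it converges a.s.\ to a finite limit and is therefore bounded a.s. Combining, $\{z^k\}$ is bounded with probability $1$.

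The argument for $\{u^k\}$ is analogous, with one extra term. Writing \eqref{def_uk} as $u^{k+1}=(1-b\tau_k)u^k+\tau_k r^{k+1}$ with $r^{k+1}=b\tilde h^{k+1}+\tilde J^{k+1}(y^k-x^k)$, the conditional mean $\Eb[r^{k+1}\mid\Fc_k]$ is bounded by some $M_r$ (using boundedness of $h$ and of $J^{k+1}$ on $X$ and $\|y^k-x^k\|\le D_X$) and $\Eb[\|r^{k+1}\|^2\mid\Fc_k]\le C_r$, uniformly. Since $\tau_k\to 0$, we have $b\tau_k\le 1$ for all large $k$, so the same signal/noise split applies from that index on: the signal recursion $p_u^{k+1}=(1-b\tau_k)p_u^k+\tau_k\Eb[r^{k+1}\mid\Fc_k]$ contracts toward the ball of radius $M_r/b$ and hence stays bounded by $\max(|p_u^{k_0}|,M_r/b)$, while the noise recursion satisfies $\Eb[(q_u^{k+1})^2\mid\Fc_k]\le(q_u^k)^2+\tau_k^2 C_r$ and is again a.s.\ convergent by Robbins--Siegmund. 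Thus $\{u^k\}$ is bounded with probability $1$ as well.

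I expect the main obstacle to be the bookkeeping that makes the drift bounds $M_w,M_r$ genuinely independent of $z^k$ and $u^k$ — that is, verifying that the subgradient magnitudes of $f$ and the value of $h$ are controlled uniformly on the compact $X$ regardless of the inner estimate $u^k$ — since without this decoupling the two recursions would be coupled and a joint Lyapunov argument would be needed. The remaining care is handling the $\Fc_k$-measurable random stepsize $\tau_k$ and the mismatch between the contraction rate $b\tau_k$ and the perturbation scale $\tau_k$ in the $u$-recursion, both resolved by the tail argument ($b\tau_k\le 1$ eventually) and by invoking $\sum_k\tau_k^2<\infty$ a.s.
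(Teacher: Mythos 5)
The paper does not actually supply a proof of this lemma --- it states only that ``the proof is routine and is therefore omitted'' --- so there is nothing to compare against line by line. Your argument is a correct and complete instantiation of the routine argument the authors evidently have in mind: you correctly identify the key decoupling (that $\|g_x\|$, $|g_u|\le\varkappa$, $\|J\|$ and $h$ are bounded uniformly on the compact $X$ \emph{independently of} $u^k$ and $z^k$, by (A2) and the explicit structure of $\partial f$), rewrite each update as a forgetting recursion $(1-a\tau_k)z^k+a\tau_k w^{k+1}$, split into a deterministic-drift part controlled by a convex-combination induction and a martingale part controlled by Robbins--Siegmund via $\sum_k\tau_k^2<\infty$ a.s., and you correctly handle the two genuine wrinkles (the possibility $b\tau_k>1$ for small $k$, and the need for conditional independence of $e_g^{k+1}$ and $E^{k+1}$ to control the product $[\tilde J^{\,k+1}]^\top\tilde g_u^{k+1}$). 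Two cosmetic caveats: the constants $M_w$, $C_w$, $M_r$, $C_r$ need not be deterministic, since the bias terms $\delta_g^{k+1}$, $\delta_h^{k+1}$ are only assumed to converge to $0$ almost surely and hence admit only path-dependent bounds --- but a finite random bound is all that is needed for almost-sure boundedness, exactly as with the random constant $L$ in the paper's Theorem~\ref{t:convergence}; and assumption (A4)(iii) as printed does not actually state $\lim_{k\to\infty}\Delta^{k+1}=0$, which your argument (and the paper's own claim \eqref{theta-abstract}--\eqref{alpha-abstract}) implicitly requires --- this appears to be an omission in the paper rather than a gap in your proof.
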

The proof is routine and is therefore omitted.

We analyze the method by the differential inclusion technique, by refining and specializing the approach adopted in \citep{ruszczynski2020stochastic}.
{ Although our model does not fit the assumptions of \citep{ruszczynski2020stochastic},  our result on the convexity of the multifunction $\varGamma(\cdot)$ allows for proving convergence in this case as well.}

\begin{theorem}
\label{t:convergence}
If the assumptions {\rm (A1)--(A4)} are satisfied,
then with probability 1 every accumulation point $\hat{x}$ of the sequence $\{x^k\}$ is stationary, $\lim_{k\to\infty} (u^k-h(x^k))=0$, and the sequence $\{F(x^k)\}$ is convergent.
\end{theorem}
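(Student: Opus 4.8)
The plan is to analyze the coupled recursions \eqref{def_zk}--\eqref{def_uk} (written compactly as the stochastic differential inclusion \eqref{u-abstract}) using the ODE method for differential inclusions, following and adapting the framework of \citep{ruszczynski2020stochastic}. The key point is that Lemmas~\ref{l:Gamma} and~\ref{l:bounded} supply exactly the two structural ingredients this method needs: the driving multifunction $\varGamma$ is compact- and convex-valued, and the fast variables $\{z^k\}, \{u^k\}$ remain bounded almost surely, while $\{x^k\}\subset X$ is bounded by compactness (A1). I would first verify that the decomposition \eqref{u-abstract}--\eqref{alpha-abstract} holds: the martingale-difference term $\theta^{k+1}$ collects the zero-mean errors $e_g^{k+1}, e_h^{k+1}, E^{k+1}$ from (A4), whose conditional second moments are bounded, and the vanishing term $\alpha^{k+1}$ collects the $\delta$-biases that tend to zero. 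Because $X$ is compact and the subgradients are integrably bounded on compacts (A2), $\varGamma$ has linearly bounded (in fact bounded) values, so the iterates stay in a compact region.

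The heart of the argument is a \emph{two-time-scale / single-time-scale separation via a Lyapunov function}. I would introduce the merit function built from the gap $\eta$ of \eqref{gap} together with a tracking penalty, something of the form
\[
V(x,z,u) = -\eta(x,z) + \tfrac{\beta}{2}\,(u-h(x))^2,
\]
which is nonnegative and vanishes precisely at stationary points with exact inner tracking. The plan is to show that along the trajectories of the limiting differential inclusion
\[
\dot z \in a\big(G_F\text{-selection} - z\big),\qquad \dot u = J(\bar y(x,z)-x)+b(h(x)-u),
\]
$V$ is a strict Lyapunov function: the $z$-dynamics pull $z$ toward the subdifferential set $G_F(x)$ defined in \eqref{GF}, the $u$-dynamics drive $u-h(x)$ to zero (using \eqref{opt-eta} to control the cross term coming from $\dot x$), and the slow drift of $x$ along direction $\bar y(x,z)-x$ decreases the gap. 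The convexity of $\varGamma$ from Lemma~\ref{l:Gamma} is what guarantees the limiting inclusion has the right (upper semicontinuous, convex-valued) form so that the Benaïm--Hofbauer--Sorin invariance-principle machinery applies, yielding that accumulation points of $\{x^k\}$ lie in the set where $\dot V = 0$, i.e.\ the stationary set $X^*$ characterized by \eqref{stationary} and the remark after \eqref{opt-eta}.

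Concretely, the steps in order are: (i) establish the compact-inclusion form \eqref{u-abstract} and the a.s.\ boundedness of all three sequences; (ii) invoke the differential-inclusion approximation theorem to show the interpolated process is an asymptotic pseudotrajectory of the flow induced by $\varGamma$, which requires the summability conditions in (A3) on $\sum \Eb[\tau_k^2]$ to kill the martingale noise and $\tau_k\to 0$, $\sum\tau_k=\infty$ to guarantee the scale separation; (iii) identify the internally chain-transitive sets of the limit flow with the stationary set using the Lyapunov function $V$ and the optimality characterization via $\eta$; and (iv) conclude $u^k-h(x^k)\to 0$ from the $u$-component and convergence of $\{F(x^k)\}$ from monotone decrease of the merit function along the slow scale combined with continuity of $F$.

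\textbf{The main obstacle} I anticipate is precisely the departure from \citep{ruszczynski2020stochastic} flagged in the excerpt: there the upper-level $f$ is continuously differentiable, whereas here $f$ in \eqref{f-def} has a nonsmooth kink from $\max(0,\cdot)$ and $\ell$ itself may be nonsmooth. This breaks the single-valuedness of the driving field and forces one to work with a genuine \emph{set-valued} $\varGamma$. The delicate part is verifying that the Lyapunov decrease is strict along \emph{every} measurable selection of $\varGamma$ at nonstationary points; this is where Lemma~\ref{l:Gamma} is essential, because the convexity of $\varGamma(x,z,u)$ ensures that the set of limiting directions is itself convex and that $z$ is attracted to the \emph{convex hull} $G_F(x)$ appearing in \eqref{GF}, matching the stationarity condition \eqref{stationary}. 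Handling the measure-theoretic subtleties of the interchange in Lemma~\ref{l:Gamma} (the $[-P_1,-P_2]$ interval for $\partial_u f$) at points where $\Pbb\{u=\ell(x,D)\}>0$ is the technical crux that makes the upper semicontinuity of $\varGamma$ hold, without which the invariance principle would fail.
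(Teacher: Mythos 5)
Your overall architecture is the same as the paper's: verify the decomposition \eqref{u-abstract}--\eqref{alpha-abstract}, pass to the limiting differential inclusion (the paper invokes \citep[Thm.~3.2]{majewski2018analysis} for the asymptotic-pseudotrajectory step), use the convexity/compactness of $\varGamma$ from Lemma~\ref{l:Gamma}, and identify limit sets via a Lyapunov argument. However, there are two concrete gaps in your plan. First, your proposed merit function $V(x,z,u)=-\eta(x,z)+\tfrac{\beta}{2}(u-h(x))^2$ does not have the strict descent property you need. Along the flow, the increment of $-\eta$ is bounded above by $-a\int_0^T \hat{G}(t)^\top\dot{X}(t)\,dt - ac\int_0^T\|\dot{X}(t)\|^2\,dt$, and the directional-derivative term $\int \hat{G}^\top\dot{X}$ has no sign in general; the paper's Lyapunov function \eqref{Lyapunov} includes the extra term $a f(x,u)$ precisely so that its increment, which contains $+\int\hat{G}^\top\dot{X}$, cancels this cross term and leaves the negative-definite bound \eqref{W-incr}. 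Moreover, the increment of $f(X(\cdot),U(\cdot))$ also produces the term $b\int G_u(t)^\top[H(t)-U(t)]\,dt$, which is of \emph{first} order in $\|H-U\|$; your quadratic penalty $\tfrac{\beta}{2}(u-h(x))^2$ decays like $-\beta b\int (H-U)^2$ and cannot dominate a linear term near $u=h(x)$ for any fixed $\beta$. The paper uses the norm $\gamma\|h(x)-u\|$, whose derivative along the flow is exactly $-b\gamma\int\|H-U\|$, so choosing $\gamma>aL$ absorbs the cross term. Without both corrections your $V$ is not a Lyapunov function for this system.

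Second, even granting a valid descent inequality, the zero set of the dissipation is $\{\eta(x,z)=0,\ u=h(x)\}$, and $\eta(\bar x,\bar z)=0$ only says $-\bar z\in N_X(\bar x)$; it does \emph{not} say $\bar z\in G_F(\bar x)$, so it does not yet give the stationarity condition \eqref{stationary}. Your plan asserts that the invariance principle delivers accumulation points in $X^*$, but an additional argument is required to exclude the case $\eta(\bar x,\bar z)=0$ with $\bar x\notin X^*$. The paper handles this in its Step 3 by supposing $X(t)\equiv\bar x$, showing via the auxiliary Lyapunov function $\dist(z,G_F(\bar x))$ that the $z$-dynamics $\dot z\in a(G_F(\bar x)-z)$ force $Z(t)\to G_F(\bar x)$, and then using $\dist(0,G_F(\bar x)+N_X(\bar x))>0$ to conclude that $-Z(T)\notin N_X(\bar x)$ for some $T$, hence $\dot X(T)\ne 0$, contradicting the assumption that all points on the limiting path are accumulation points already shown to satisfy $\eta=0$. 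You need to supply this step (or an equivalent one) before the first assertion of the theorem follows.
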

\begin{proof}
We consider a specific trajectory of the method and divide the proof into three standard steps.

\emph{Step 1: The Limiting Dynamical System.}  We denote by $p^k=(x^k,z^k,u^k)$, $k=0,1,2,\dots$, a realization
of the sequence generated by the algorithm.
We introduce the accumulated stepsizes
$t_k = \sum_{j=0}^{k-1}\tau_j$, $k=0,1,2 \dots$, and we construct the interpolated trajectory
\[
P_0(t) = p^k + \frac{t-t_{k}}{\tau_k}(p^{k+1}-p^k),\quad t_{k}\le t \le t_{k+1},\quad k=0,1,2,\dots.
\]
For an increasing sequence of positive numbers $\{s_k\}$ diverging to infinity, we define shifted trajectories $P_k(t) =P_0(t+s_k)$.
The sequence $\{p^k\}$ is bounded by Lemma \ref{l:bounded} and so are the functions $P_k(\cdot)$.

By \citep[Thm. 3.2]{majewski2018analysis},  for any infinite set $\Kc$ of positive integers,
there exist an infinite  subset $\Kc_1 \subset \Kc$ and an absolutely continuous function  $P_\infty:
[0,+\infty) \to X\times \Rb^n\times \Rb^m$ such that for any $T > 0$
\[
\lim_{\substack{{k\to\infty}\\{k\in \Kc_1}}}\sup_{t\in[0,T]} \big\| P_k(t)-P_\infty(t)\big\|= 0,
\]
and $P_\infty(\cdot)=\big(X_\infty(\cdot),Z_\infty(\cdot),U_\infty(\cdot)\big)$
is a solution of the system of differential equations and inclusions corresponding to \eqref{def_xk} and
 and \eqref{u-abstract}:
\begin{gather}
\dt{x}(t) = \bar{y}\big(x(t),z(t)\big)-x(t), \label{dx}\\
\big(\dt{z}(t), \dt{u}(t)\big) \in \varGamma(x(t),z(t),u(t)). \label{dz}
\end{gather}
Moreover, for any $t\ge 0$, the triple $\big(X_\infty(t),Z_\infty(t),U_\infty(t)\big)$
is an accumulation point of the sequence $\{(x^k,z^k,u^k)\}$.

In order to analyze the equilibrium points of the system \eqref{dx}--\eqref{dz}, we first study the dynamics
of the functions $H(t) = h(X(t))$ and $F(t) = f(X(t),U(t))$.
It follows from \eqref{dx} that the path $X(\cdot)$ is continuously differentiable. By virtue of assumption (A2)
and \citep[Thm. 1]{Rusz-OptLet},
for any $J(t) \in \partial h(X(t))$,
\begin{equation}
\label{f-incr-M}
 \dt H(t)
=   J(t) \dt X(t).
\end{equation}
Again, Assumption (A2) and  \citep[Thm. 1]{Rusz-OptLet} imply that for any $G(t) \in \partial f(X(t),U(t))$,
\begin{equation}
\label{f-incr-m-2}
 \dt F(t)
=   G_x(t)^\top  \dt X(t)  + G_u(t)^\top  \dt U(t).
\end{equation}
To understand the dynamics of $U(\cdot)$, from \eqref{dz} and \eqref{u-abstract} we deduce that
\begin{equation}
\label{UM}
\dt U(t) = \hat{J}(t) \dt X(t) + b [H(t) - U(t)],
\end{equation}
with some $\hat{J}(t) \in \partial h(X(t))$.
Therefore, using $J(\cdot) = \hat{J}(\cdot)$ in \eqref{f-incr-M}, we obtain
\begin{equation}
\label{Um}
\dt U(t) =  \dt H(t) + b[ H(t) - U(t)].
\end{equation}
Consequently, the solution of \eqref{f-incr-m-2}--\eqref{UM} has the form:
 \begin{equation}
 \label{Phi-sol}
   \dt F(t) =
   \hat{G}(t)^\top \dt X(t) + b G_u(t)^\top[ H(t) - U(t)].
 \end{equation}
with $\hat{G}(t) = {G}_x(t) + \hat{J}(t)^\top  G_u(t)$.
These observations will help us study the stability of the system.

\emph{Step 2: Descent Along a Path.}  We use the Lyapunov function
\begin{equation}
\label{Lyapunov}
W(x,z,u) = a f(x,u) -  \eta(x,z)  + \gamma \big\|h(x)-u\big\|,
\end{equation}
with the coefficient $\gamma>0$ to be specified later.

Directly from \eqref{Phi-sol} we obtain
\begin{equation}
\label{f-incr2}
  f(X(T),U(T)) - f(X(0),U(0))
 = \int_0^T  \hat{G}(t)^\top \dt{X}(t)  \;dt
  + b  \int_0^T  G_{u}(t)^\top \big[H(t)-U(t)\big]\;dt.
\end{equation}

We now estimate the change of $\eta(X(\cdot),Z(\cdot))$ from 0 to $T$.
 Since $\bar{y}(x,z)$ is unique, the function $\eta(\cdot,\cdot)$ is continuously differentiable.
Therefore, the chain formula holds for it as well:
\begin{multline*}
\eta(X(T),Z(T)) - \eta(X(0),Z(0)) \\
= \int_0^T  \big\langle \nabla_x \eta(X(t),Z(t)), \dt{X}(t) \big\rangle \;dt +
\int_0^T  \big\langle \nabla_z \eta(X(t),Z(t)), \dt{Z}(t) \big\rangle \;dt.
\end{multline*}
From \eqref{dz} we obtain
\[
\dt{Z}(t) = a\big(\hat{G}^\top (t) - Z(t)\big),
\]
with the same $\hat{G}(\cdot)$ as in \eqref{Phi-sol} and \eqref{f-incr2}.

Substituting
$\nabla_x \eta(x,z) = -z+{c}(x- \bar{y}(x,z))$,
$\nabla_z \eta(x,z) =  \bar{y}(x,z)-x$,  and
using \eqref{opt-eta}, we obtain
\begin{align*}
\lefteqn{\eta(X(T),Z(T)) - \eta(X(0),Z(0))}\quad  \\
&= \int_0^T  \big\langle -Z(t)+{c}(X(t)-\bar y(X(t),Z(t)))\, , \,\bar{y}(X(t),Z(t)) - X(t) \big\rangle \;dt \\
&{\quad } +
a \int_0^T  \big\langle \bar y(X(t),Z(t))-X(t)\,,\,\hat{G}(t) - Z(t) \big\rangle \;dt\\
&\ge\; a \int_0^T  \big\langle \bar y(X(t),Z(t))-X(t)\,,\, \hat{G}(t) - Z(t) \big\rangle \;dt\\
&\ge\; a \int_0^T  \hat{G}(t)^\top\big( \bar y(X(t),Z(t))-X(t)\big) \;dt
 + a{c} \int_0^T  \big\| \bar y(X(t),Z(t))-X(t)\big\|^2 \;dt.
   \end{align*}
With a view at \eqref{dx}, we conclude that
\begin{equation}
\label{eta-incr}
 \eta(X(T),Z(T)) - \eta(X(0),Z(0))
  \ge \;  a \int_0^T  \hat{G}^\top(t) \dt X(t) \;dt
 + a{c} \int_0^T  \big\| \dt X(t)\big\|^2 \;dt.
  \end{equation}

We now estimate the increment of $\big\|  H(\cdot)-U(\cdot)\big\|$ from 0 to $T$. As
 $\| \cdot\| $ is convex and $H(\cdot)$ and $U(\cdot)$ are absolutely continuous, the chain rule applies as well: for any
 $\lambda(t) \in \partial \| H(t)- U(t)\|$ we have
\[
 \big\|H(T)-U(T)\big\| -  \big\|H(0)-U(0)\big\|
= \int_0^T \big\langle \lambda(t),  \dt H(t) - \dt U(t) \big\rangle\; dt.
\]
By \eqref{Um},  $\dt H(t) - \dt U(t) = b\big[ U(t)-H(t)\big]$ for almost all $t$.
Furthermore,
\[
\lambda_m(t)=\frac{H(t)-U(t)}{\|H(t)-U(t)\|},\quad \text{if}\quad  H(t)\ne U(t).
\]
Therefore
\begin{equation}
\label{norm-incr}
 \big\|H(T)-U(T)\big\| -  \big\|H(0)-U(0)\big\| = - b \int_0^T \big\| H(t)-U(t)\big\| \; dt.
 \end{equation}

We can now combine \eqref{f-incr2}, \eqref{eta-incr}, and \eqref{norm-incr} to estimate the change of the function \eqref{Lyapunov}:
\begin{multline*}
W\big(X(T),Z(T),U(T)\big) - W\big(X(0),Z(0),U(0)\big)\quad \\
\quad \le   a  b   \int_0^T  {G}_{u}(t)^\top\big[H(t)-U(t)\big]\;dt
 - a{c} \int_0^T  \big\| \dt X(t)\big\|^2 \;dt
- b \gamma  \int_0^T \big\|H(t)-U(t)\big\|\;dt.
\end{multline*}
Because the paths $X(t)$ and $U(\cdot)$ are bounded a.s. and the functions $f_m$ are locally Lipschitz,  a (random) constant $L$ exists, such that
$\big \| {G}_{u}(t)\big\| \le L$.
The last estimate entails:
\begin{multline}
\label{W-incr}
W\big(X(T),Z(T),U(T)\big) - W\big(X(0),Z(0),U(0)\big) \\
 \le - a{c} \int_0^T  \big\| \dt X(t)\big\|^2 \;dt - b (\gamma-aL)  \int_0^T \|H(t)-U(t)\|\;dt.
\end{multline}
By choosing $\gamma > aL$, we ensure that $W(\cdot)$ has the descent property to be used in our stability analysis at Step 3.
The fact that $L$ (and thus $\gamma$) may be different for different paths is irrelevant, because our analysis is path-wise.

\emph{Step 3: Analysis of the Limit Points.} Define the set
\[
\quad \Sc = \big\{ (x,z,u)\in X^* \times \Rb^n\times \Rb: \eta(x,z)=0,\
  u=h(x)\big\}.\quad
\]
Suppose $(\bar{x},\bar{z},\bar{u})$ is an accumulation point of the sequence $\{(x^k,z^k,u^k)\}$. If $\eta(\bar{x},\bar{z}) <0$ or
$\bar{u}\ne h(\bar{x})$, then
every solution $(X(t),Z(t),U(t))$ of the system \eqref{dx}--\eqref{dz},
starting from $(\bar{x},\bar{z},\bar{u})$ has $\|\dt X(0)\| > 0$ or $\|H(0)-U(0)\| >0$.
Using \eqref{W-incr} and arguing as in \citep[Thm. 3.20]{duchi2018stochastic} or \citep[Thm. 3.5]{majewski2018analysis},
we obtain a contradiction. Therefore,
we must have $\eta(\bar{x},\bar{z})=0$ and
$\bar{u}= h(\bar{x})$. Suppose $\bar{x}\not \in X^*$. Then
\begin{equation}
\label{non-opt}
\dist\big(0,  G_F(\bar{x}) + N_X(\bar{x})\big) >0.
\end{equation}
Suppose the system \eqref{dx}--\eqref{dz}
starts from $(\bar{x},\bar{z},\bar{u})$ and $X(t)=\bar{x}$ for all $t\ge 0$. From \eqref{dz} and \eqref{Gamma-def}, in view of the
equations $\bar{y}(\bar{x},\bar{z})=\bar{x}$ and  $\bar{u}=h(\bar{x})$, we obtain  $U(t)=  f(\bar{x})$ for all $t \ge 0$.
The inclusion \eqref{dz}, in view of \eqref{GF}, simplifies
\[
\dt{z}(t) \in a \big( G_F(\bar{x}) - z(t)\big).
\]
For the convex Lyapunov function $V(z) = \dist\big(z,G_F(\bar{x})\big)$,
we apply the classical chain formula \citep{brezis1971monotonicity}
on the path $Z(\cdot)$:
\[
V((Z(T)) - V(Z(0)) = \int_0^T \big\langle \partial V(Z(t)), \dt Z(t)\big\rangle \;dt.
\]
For $Z(t)\notin G_F(\bar{x})$, we have
\[
\partial V(Z(t)) = \frac{Z(t) - \proj_{G_F(\bar{x})}(Z(t))}{ \| Z(t) - \proj_{G_F(\bar{x})}(Z(t))\|}
\]
 and $\dt Z(t) = a (d(t) - Z(t))$ with some $d(t)\in G_1(\bar{x})$. Therefore,
\[
\big\langle \partial V(Z(t)), \dt Z(t)\big\rangle \le  - a \| Z(t) - \proj_{G_1(\bar{x})}(Z(t))\| = -a V(Z(t)).
\]
It follows that
\[
V((Z(T)) - V(Z(0)) \le - a \int_0^T  V(Z(t)) \;dt,
\]
and thus
\begin{equation}
\label{Z-conv}
\lim_{t\to\infty} \dist\big(Z(t),G_F(\bar{x})\big) = 0.
\end{equation}
It follows from \eqref{non-opt}--\eqref{Z-conv} that $T>0$ exists,
such that $ -Z(T) \not \in N_X(\bar{x})$, which yields $\dt X(T)\ne 0$. Consequently,
the path $X(t)$ starting from $\bar{x}$ cannot be constant (our supposition made right after \eqref{non-opt} cannot be true).  But if is not constant, then again $T>0$ exists, such that $\dt X(T)\ne 0$. By Step 1,
the triple $(X(T),Z(T),U(T))$ would have to be an accumulation point of the sequence $\{(x^k,z^k,u^k)\}$, a case already excluded.
We conclude that every accumulation point $(\bar{x},\bar{z},\bar{u})$  of the sequence $\{(x^k,z^k,u^k)\}$ is in $\Sc$.
The convergence of the sequence $\big\{W(x^k,z^k,u^k)\big\}$ then follows in the same way as
\citep[Thm. 3.20]{duchi2018stochastic} or \citep[Thm. 3.5]{majewski2018analysis}. As $\eta(x^k,z^k)\to 0$,
the convergence of $\{f(x^k,u^k)\}$ follows as well. Since $h(x^k)-u^k \to 0$, the sequence $\{F(x^k)\}$ is convergent
as well.
\end{proof}

\section{Numerical experiments}

In this section, we report results of numerical experiments that illustrate the performance of our single time-scale (STS) method for deep learning and logistic regression. For both applications, we consider perturbations in the training data set which leads to a distributional shift in the population measure $\mathbb{P}$, whereas we do not perturb the test data. We run the STS algorithm on the contaminated training data and investigate the robustness of the solution found by STS by considering different samples from the test data and the corresponding distribution of the test loss. {Our numerical results were obtained using Python (Version 3.7) on an Alienware Aurora R8 desktop with a 3.60 GHz CPU (i7-2677M) and 16GB memory.} 


\subsection{Deep learning} We consider a fully-connected network on two benchmark datasets: MNIST \citep{lecun2010mnist} and CIFAR10 \citep{Krizhevsky09learningmultiple}, where the model has the depth (the number of layers) of 3 and the width (the number of neurons per hidden layer) of 100. The MNIST dataset is split into a training dataset of $60000$ examples and a test dataset of $10000$ examples, whereas the CIFAR10 dataset is split into a training part of $50000$ examples and a test part of $10000$ examples. 
In both MNIST and CIFAR10 datasets, the output variable $y$ to be predicted is an integer valued from $0$ to $9$. We distort the distributions of MNIST and CIFAR10 training datasets by deleting all the data points with a $y$ value equal to $0$ (such points account for approximately 10$\%$ of the whole dataset). Based on the contaminated data, we train our model with different robustness levels $\kappa$ for 4000 iterations. To test the robustness of the model found by STS, we sample 100 points from the test dataset and compute the corresponding loss; and repeat this procedure 200 times for both datasets to generate a histogram of the test loss. We then report the corresponding cumulative distribution function (CDF) of the test loss in Figures \ref{figure-1} and  \ref{figure-2} for different values of $\kappa$, compared with results from a model trained by SGD.\footnote{{There are also adversarial learning methods \citep{madry2017towards,goodfellow2014explaining,kurakin2016adversarial,zhang2019you} where the aim is to be resistant to norm-bounded perturbations of the input before we have access to it; however, we do not compare with these methods as our formulation \eqref{stoch_prob} focuses on a distributional shift.}} 

If the training data are not contaminated at all, {we have observed in our experiments} that STS generates a similar or slightly worse solution than SGD. 
{This is expected as STS optimizes a penalized (robust) loss \eqref{stoch_prob} which is different than the empirical loss. The numerical details are omitted for the sake of brevity. 
On the other hand}, when the data contains distributional shifts, we see a clear advantage of the STS method over the SGD method.



\begin{figure}[h!]
\centering
\begin{subfigure}{0.43\textwidth}
  \includegraphics[width=\linewidth]{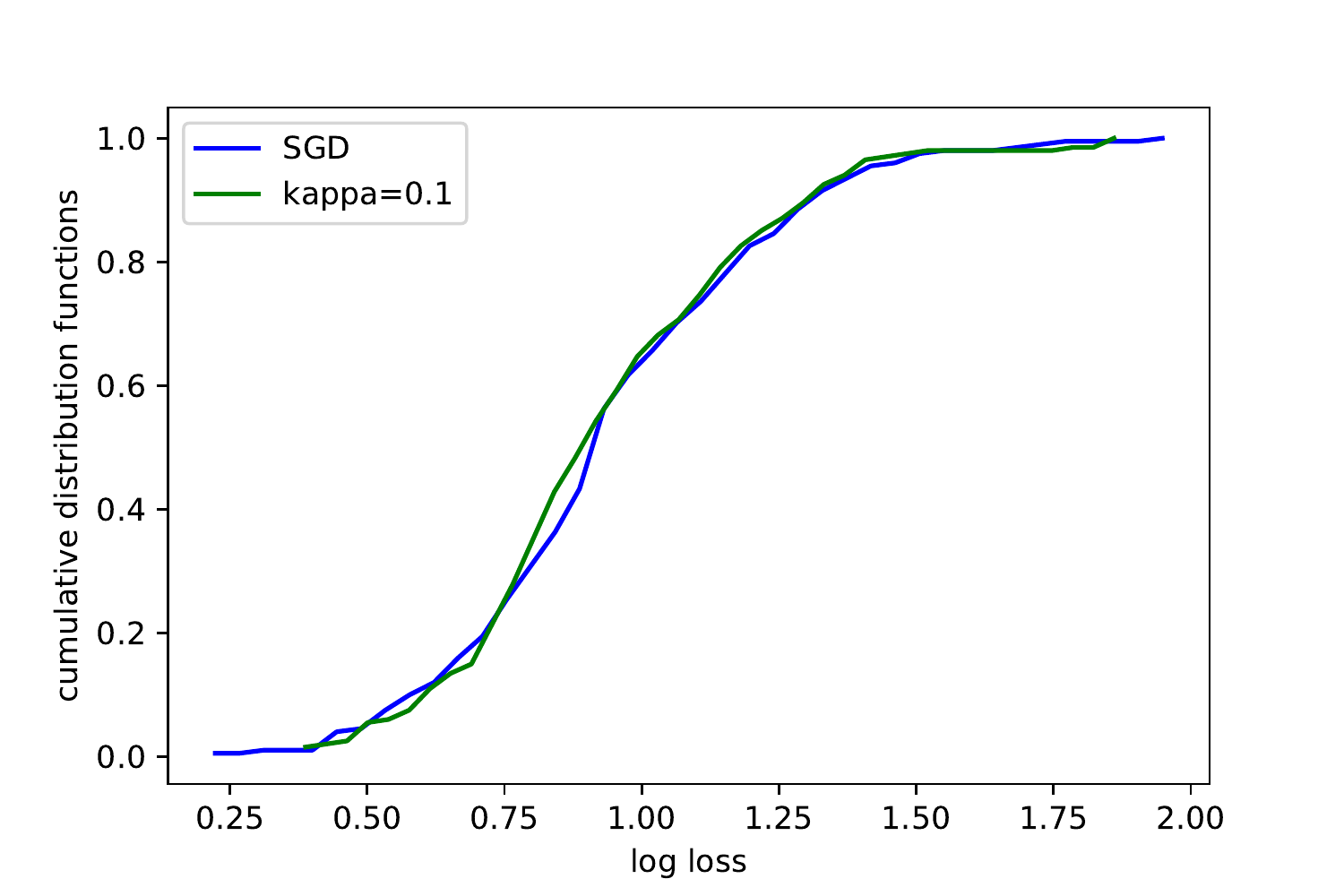}
  \caption{$\varkappa = 0.1$.}
\end{subfigure}
\begin{subfigure}{0.43\textwidth}
  \includegraphics[width=\linewidth]{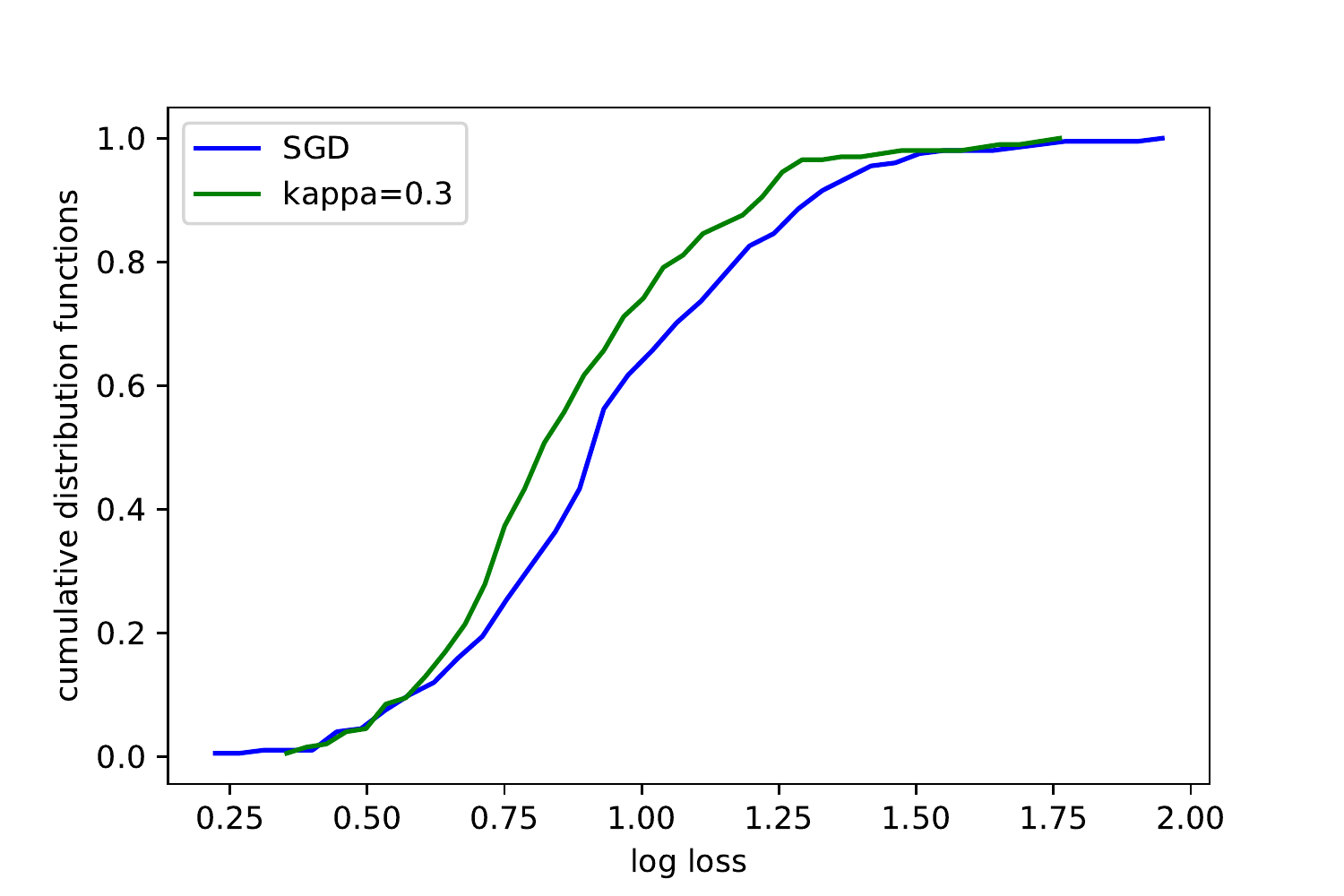}
  \caption{$\varkappa = 0.3$.}
\end{subfigure}
\begin{subfigure}{0.43\textwidth}
  \includegraphics[width=\linewidth]{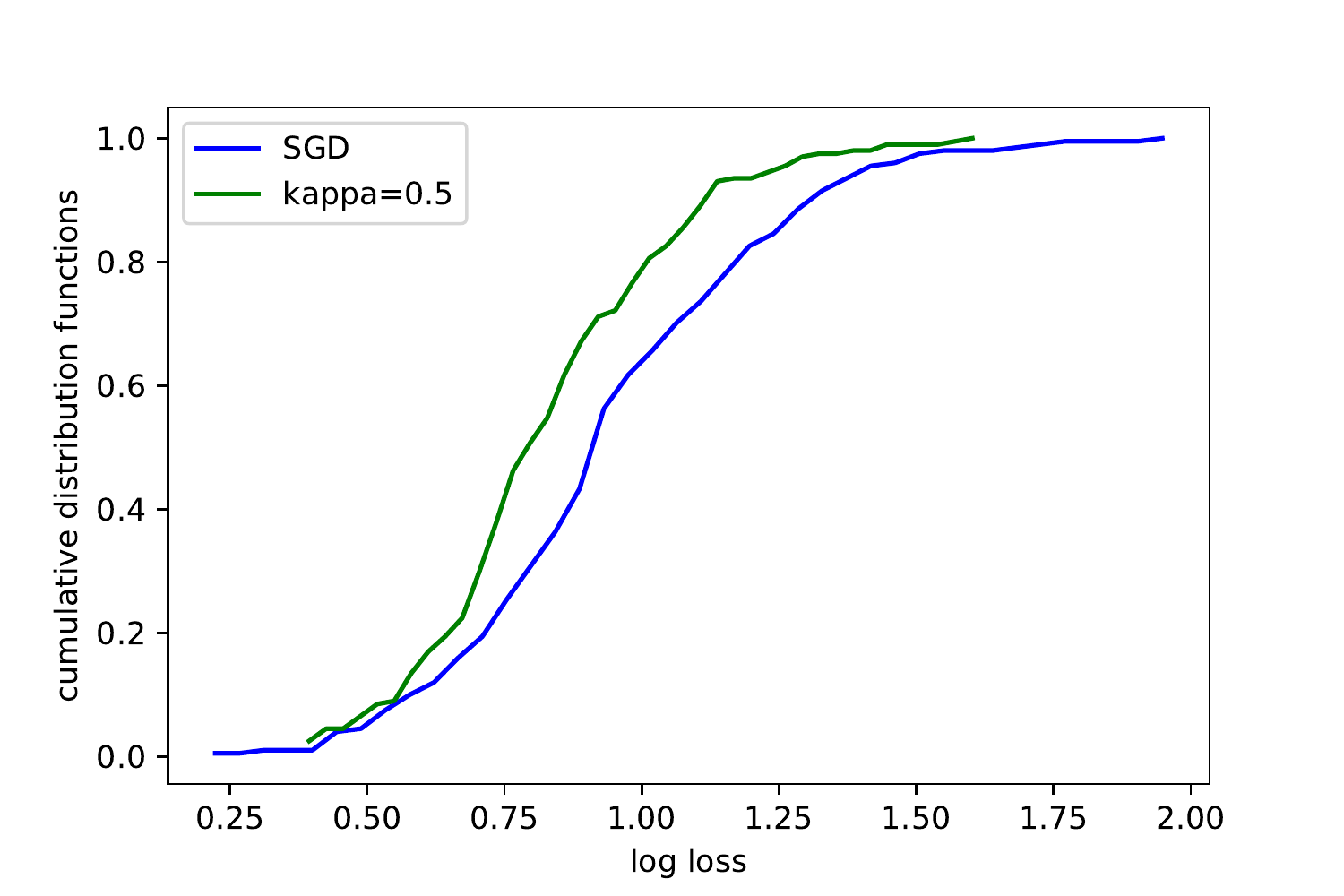}
  \caption{$\varkappa = 0.5$.}
\end{subfigure}
\begin{subfigure}{0.43\textwidth}
  \includegraphics[width=\linewidth]{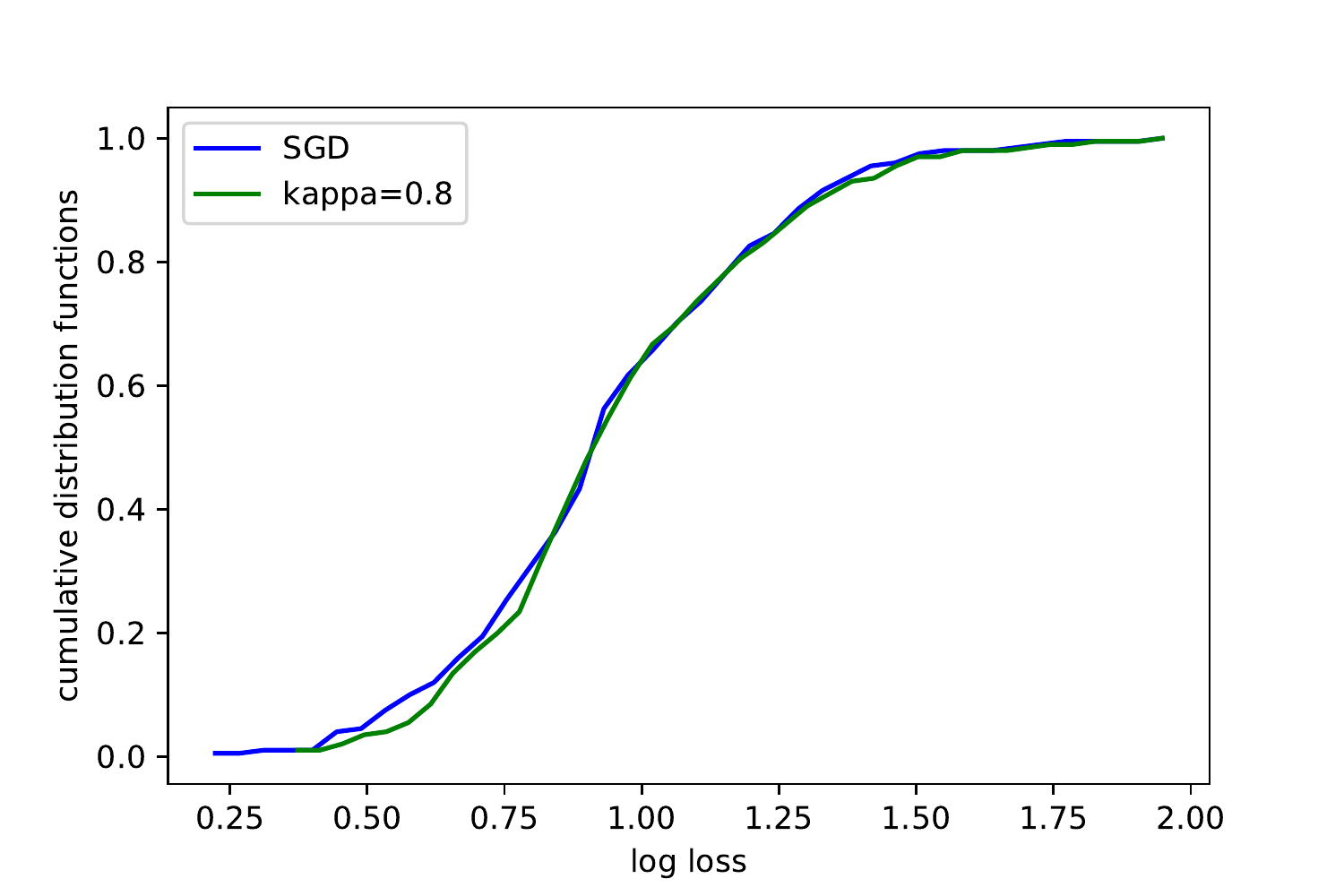}
  \caption{$\varkappa = 0.8$.}
\end{subfigure}
\caption{The CDFs of the SGD solution and the STS solutions under different robustness levels $\kappa$
for MNIST after 4000 iterations.}
\label{figure-1}
\end{figure}



\begin{figure}[h!]
\centering
\begin{subfigure}{0.43\textwidth}
  \includegraphics[width=\linewidth]{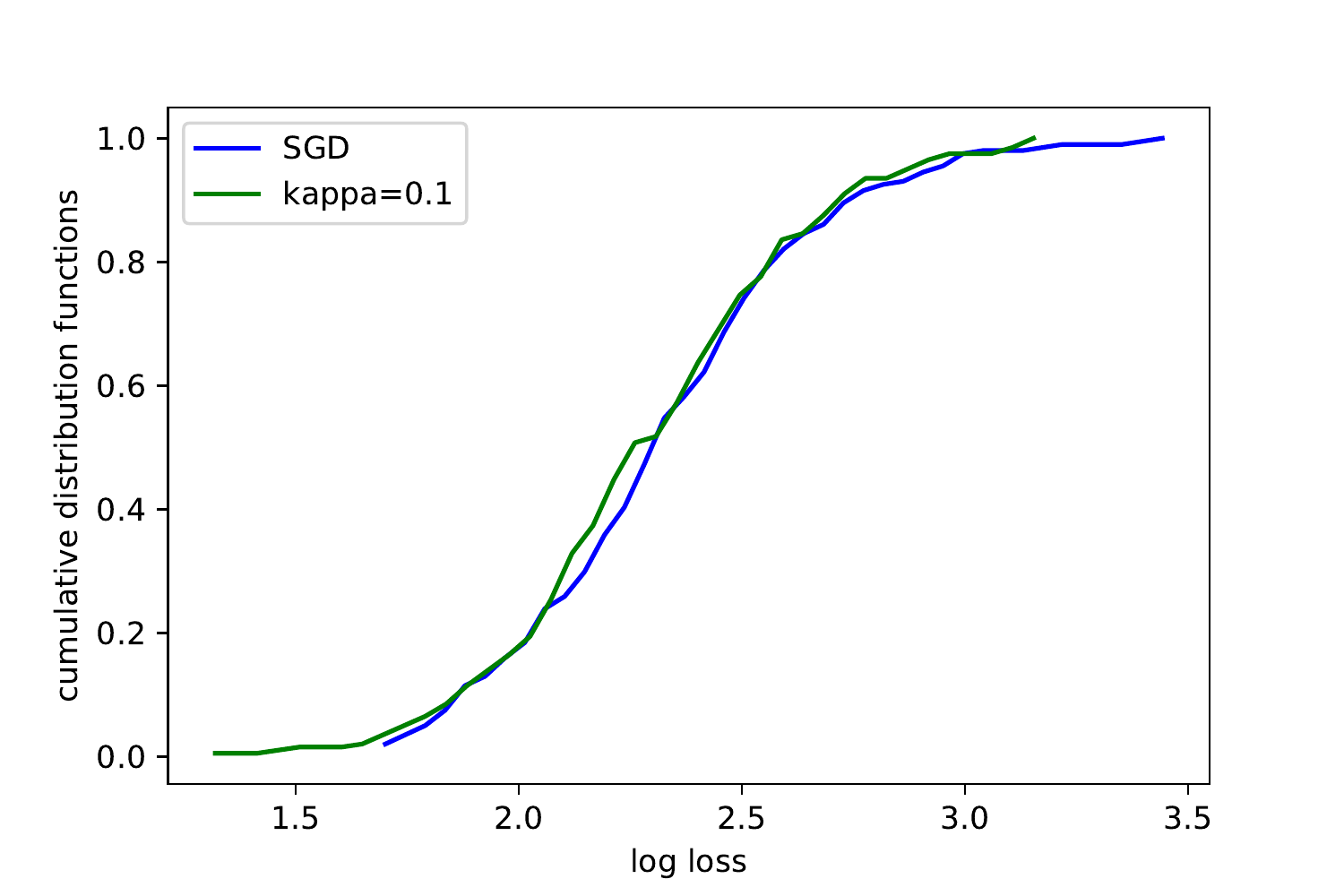}
  \caption{$\varkappa = 0.1$.}
\end{subfigure}
\begin{subfigure}{0.43\textwidth}
  \includegraphics[width=\linewidth]{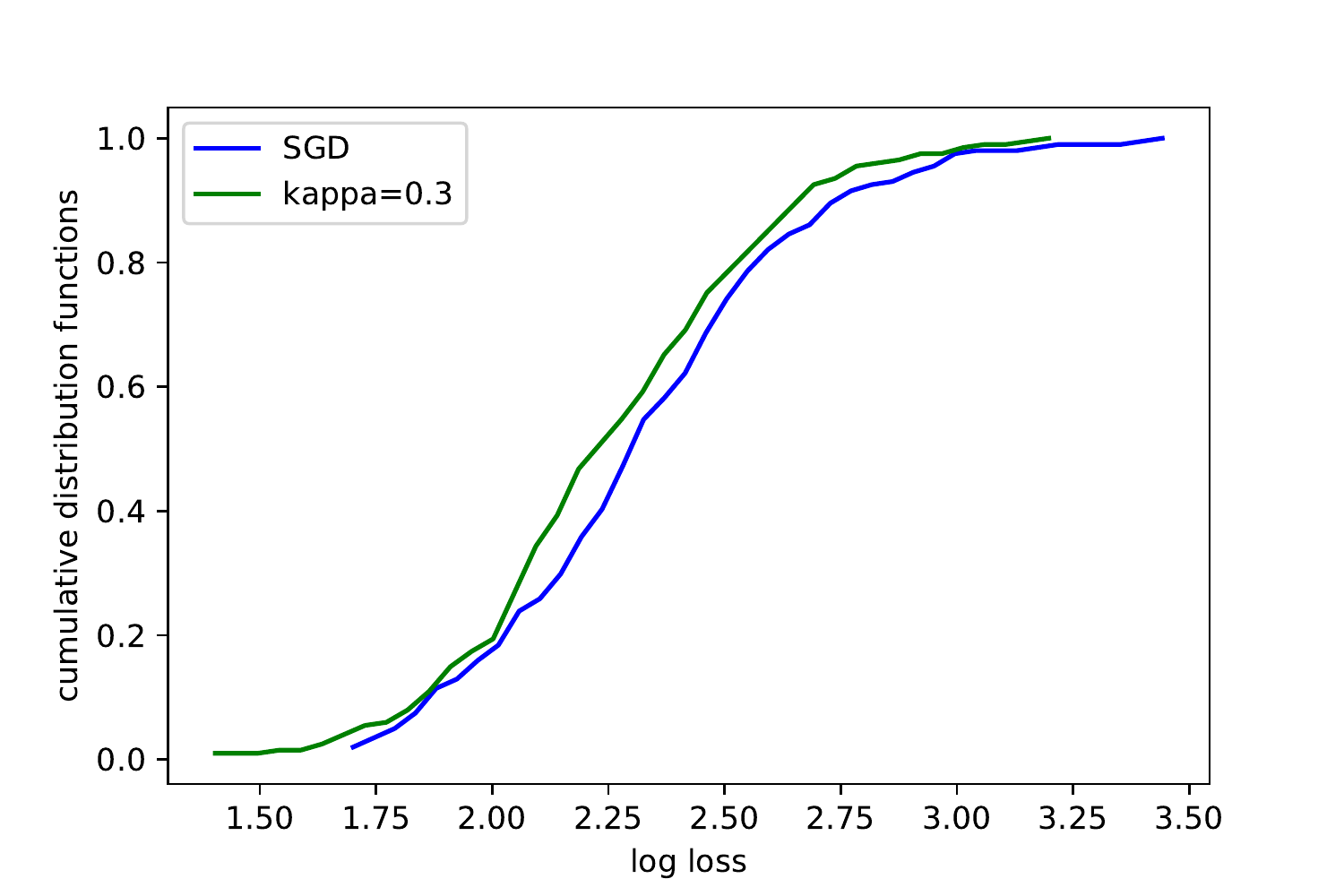}
  \caption{$\varkappa = 0.3$.}
\end{subfigure}
\begin{subfigure}{0.43\textwidth}
  \includegraphics[width=\linewidth]{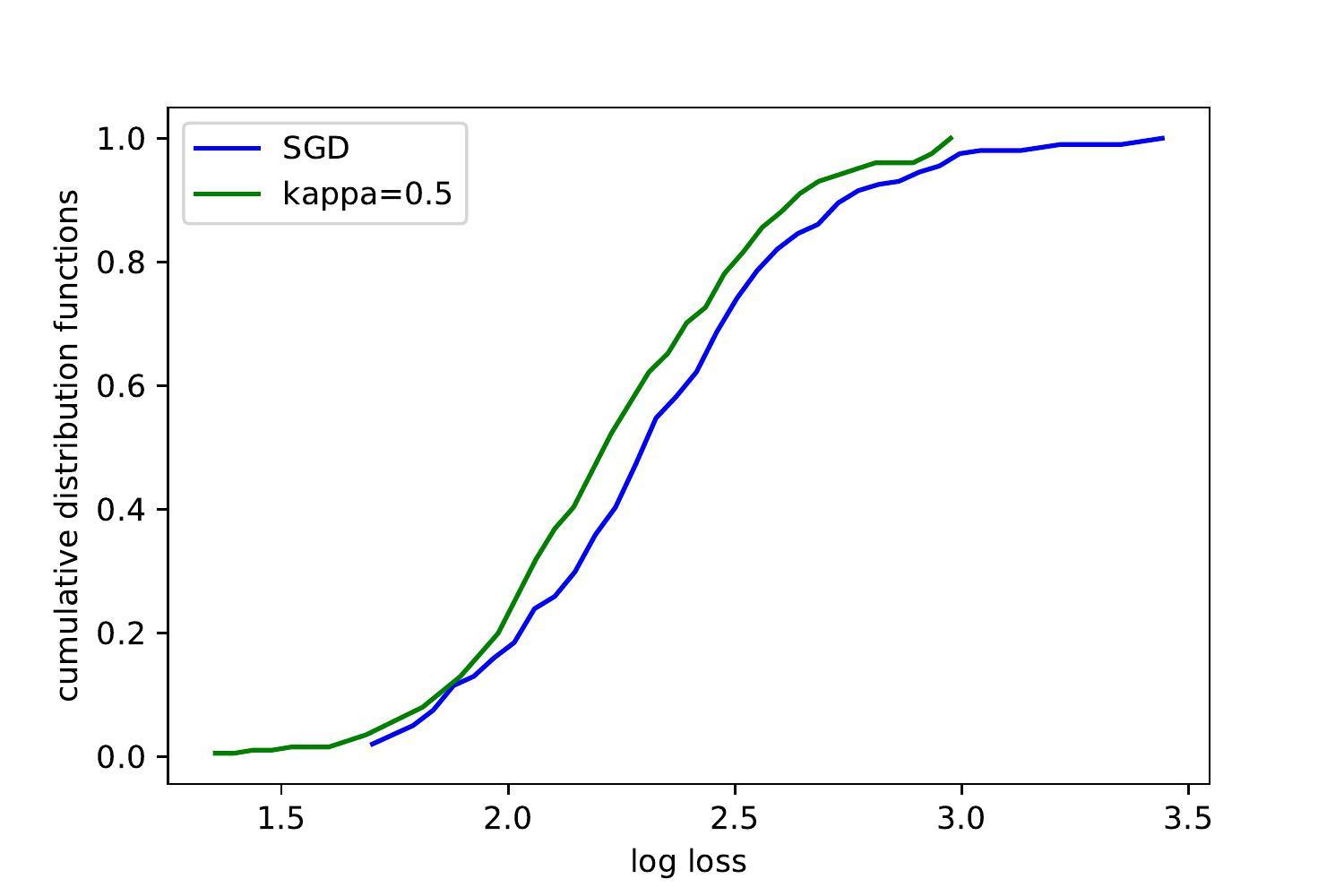}
  \caption{$\varkappa = 0.5$.}
\end{subfigure}
\begin{subfigure}{0.43\textwidth}
  \includegraphics[width=\linewidth]{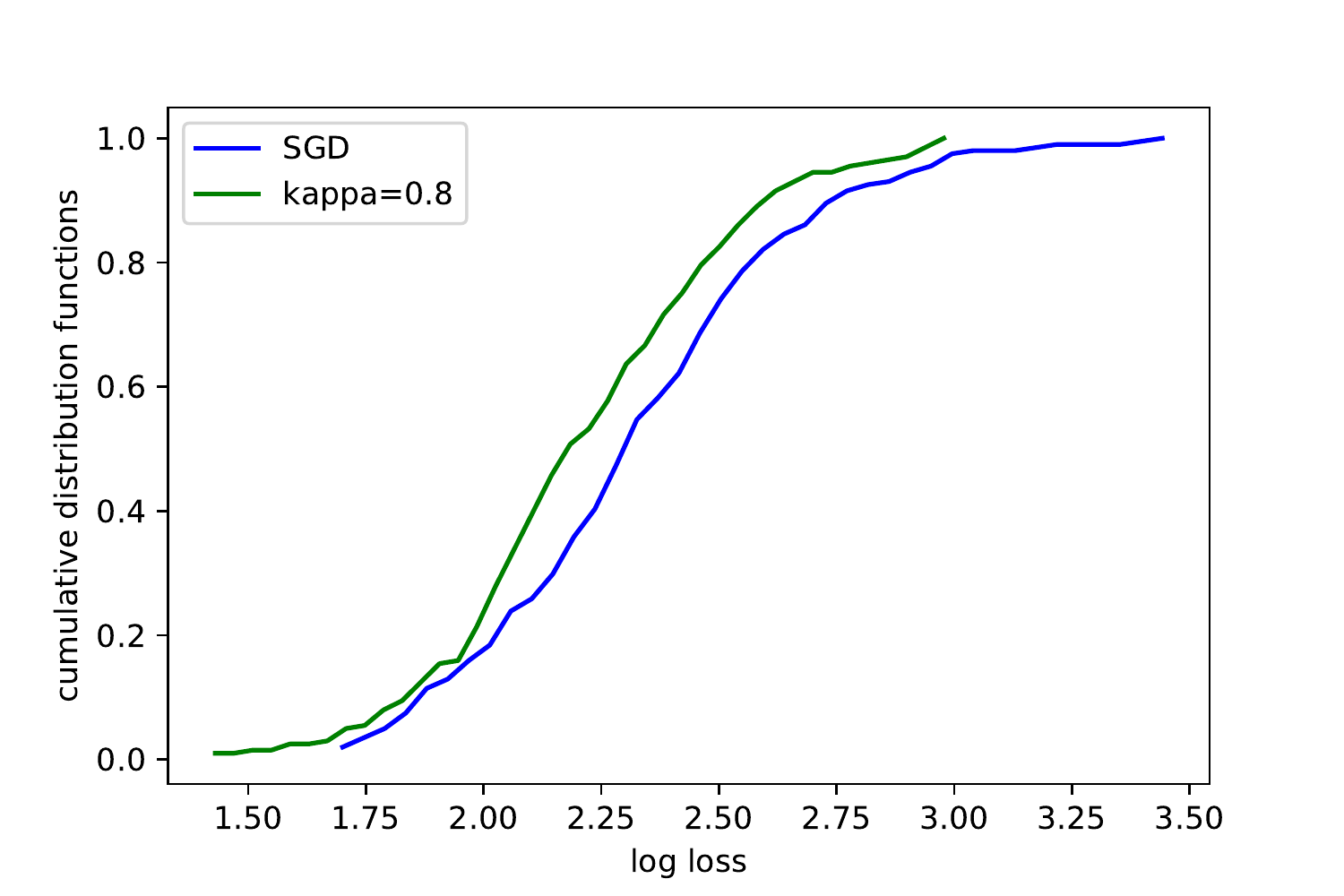}
  \caption{$\varkappa = 0.8$.}
\end{subfigure}
\caption{The CDFs of the SGD solution and the STS solutions under different robustness levels $\varkappa$
for CIFAR10 after 4000 iterations.}
\label{figure-2}
\end{figure}
\subsection{Logistic regression}
We consider binary logistic regression on the Adult dataset \citep{Dua:2019} where the loss function has the form 
\mg{$\ell(x,D) = \left[\log(1+\exp(-b\, a^Tx))\right]$ where $D=(a,b)$ is the input data.} 
The problem is to predict whether the annual income of a person will be above \$50,000 or not, based on $n=123$ predictor variables. The dataset has 32561 training examples and 16281 test examples. We follow a similar methodology as before, where we distort the training data by deleting $80\%$ of the data points with the corresponding income below \$50,000. We trained our model with STS and another state-of-the-art method Bandit Mirror Descent (BMD) developed in \citep{namkoong2016stochastic}, allowing both methods to execute the same {numbers of iterations}, which corresponds to 80000 iterations of the STS method. We then compare the cdf of the loss of the trained models based on 3000 samples from the test data. The results are reported in Figure
\ref{fig:duchi_r} for different values of the robustness level $\kappa$. We see that STS results in smaller errors {and conclude that our method has desirable robustness properties with respect to perturbations in the input distribution.}

\begin{figure}[h!]
\centering
\begin{subfigure}{0.43\textwidth}
  \includegraphics[width=1\linewidth]{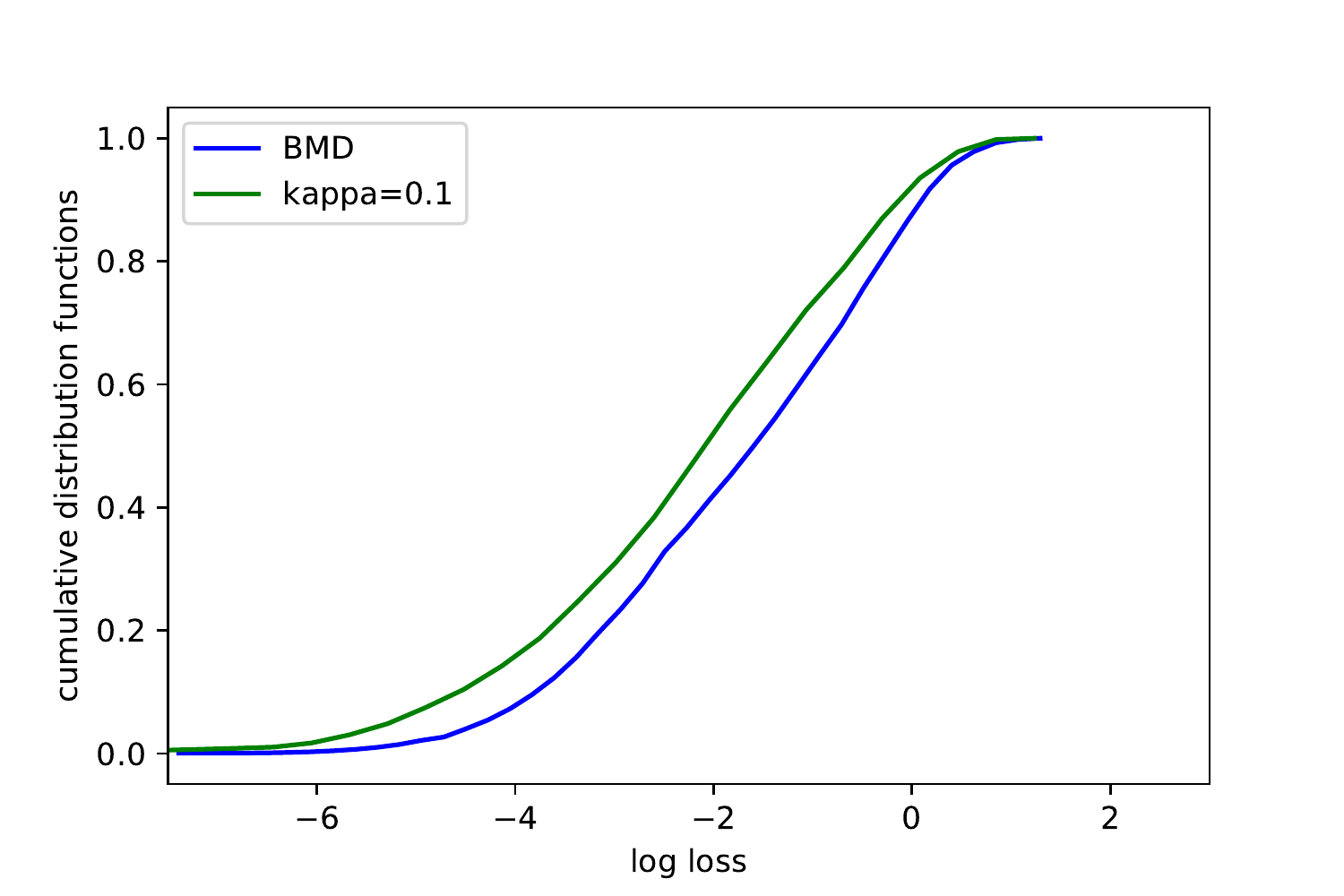}
  \caption{$\varkappa$ = 0.1}
\end{subfigure}
\begin{subfigure}{0.43\textwidth}
  \includegraphics[width=1\linewidth]{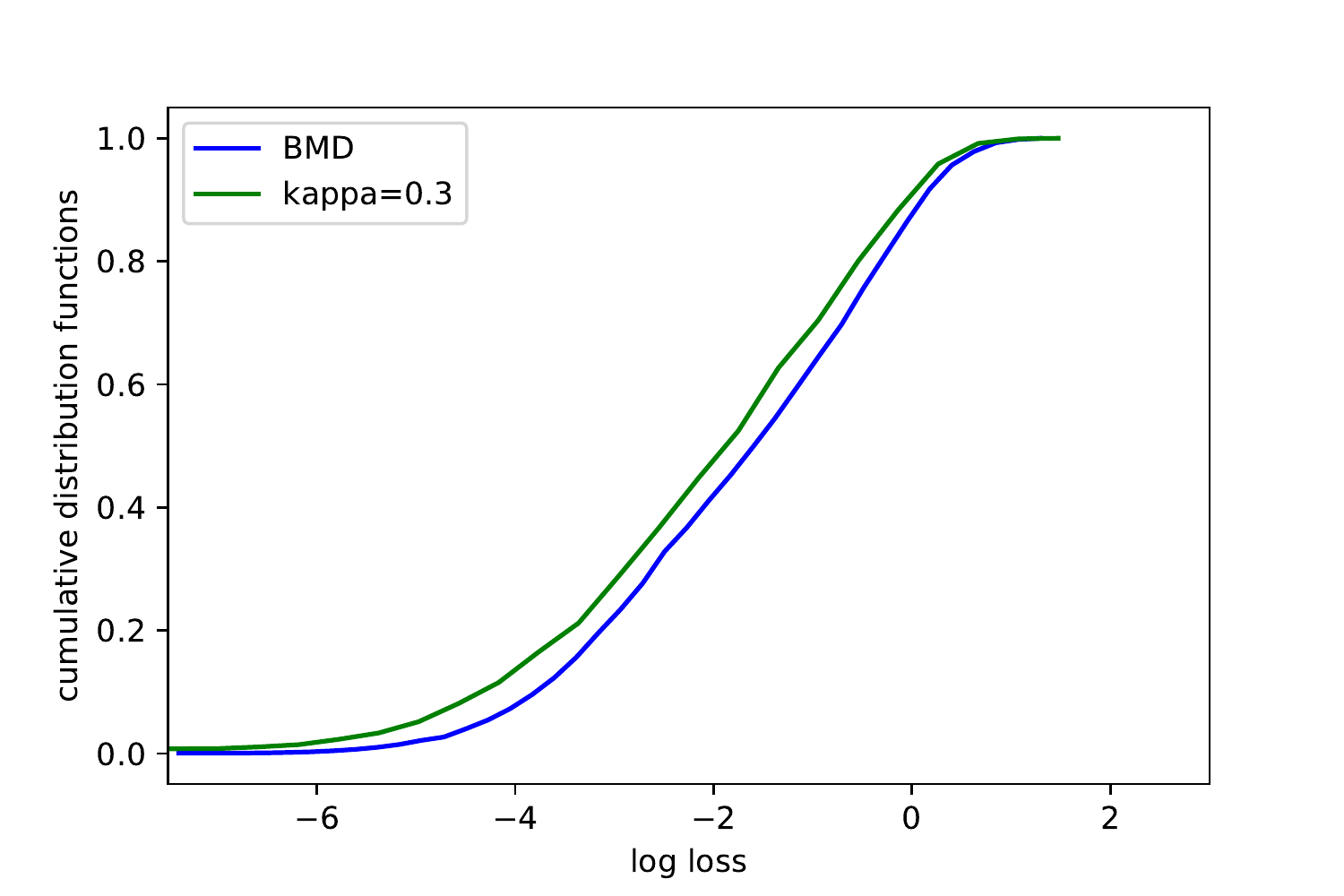}
  \caption{$\varkappa$ = 0.3}
\end{subfigure}
\begin{subfigure}{0.43\textwidth}
  \includegraphics[width=1\linewidth]{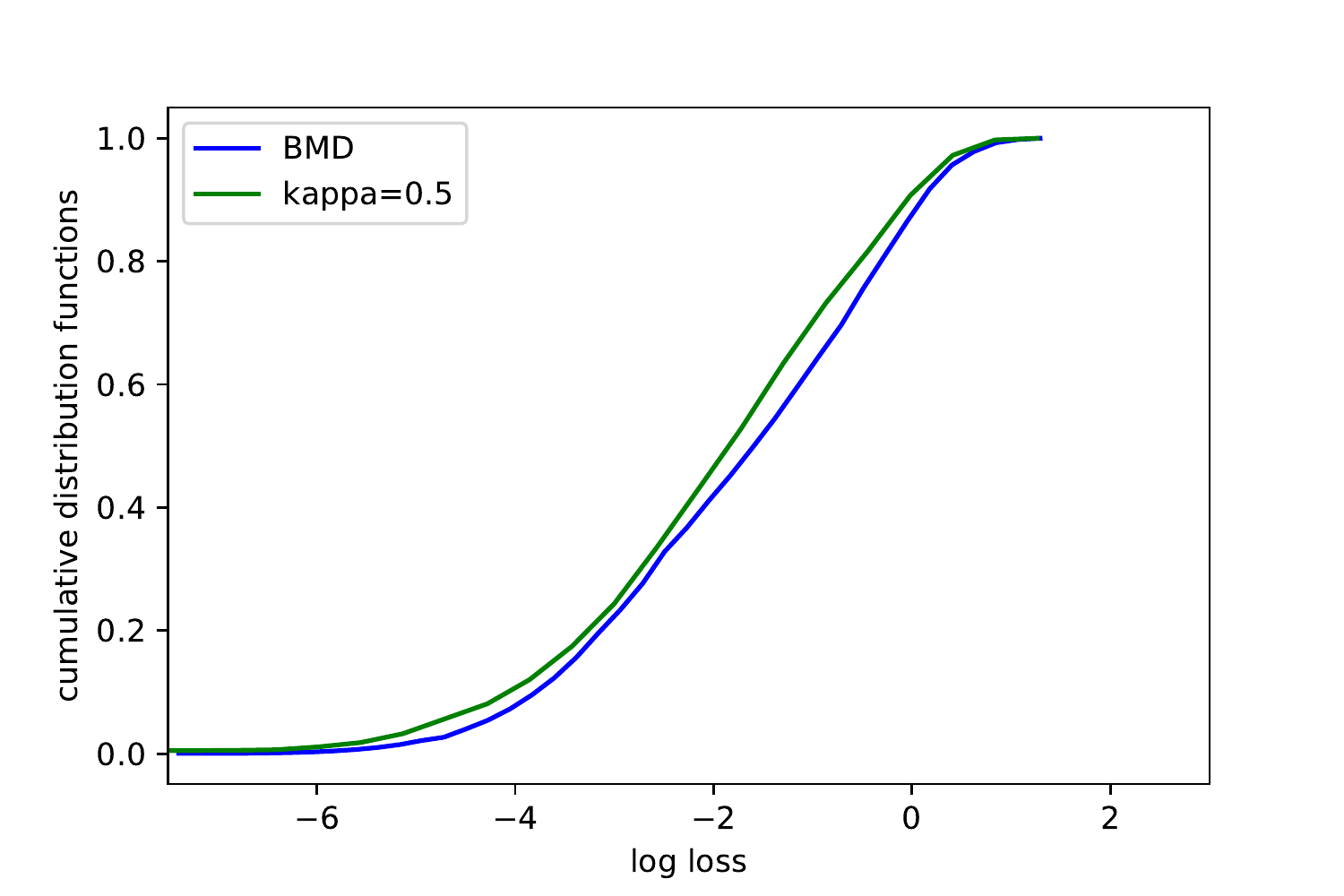}
  \caption{$\varkappa$ = 0.5}
\end{subfigure}
\begin{subfigure}{0.43\textwidth}
  \includegraphics[width=1\linewidth]{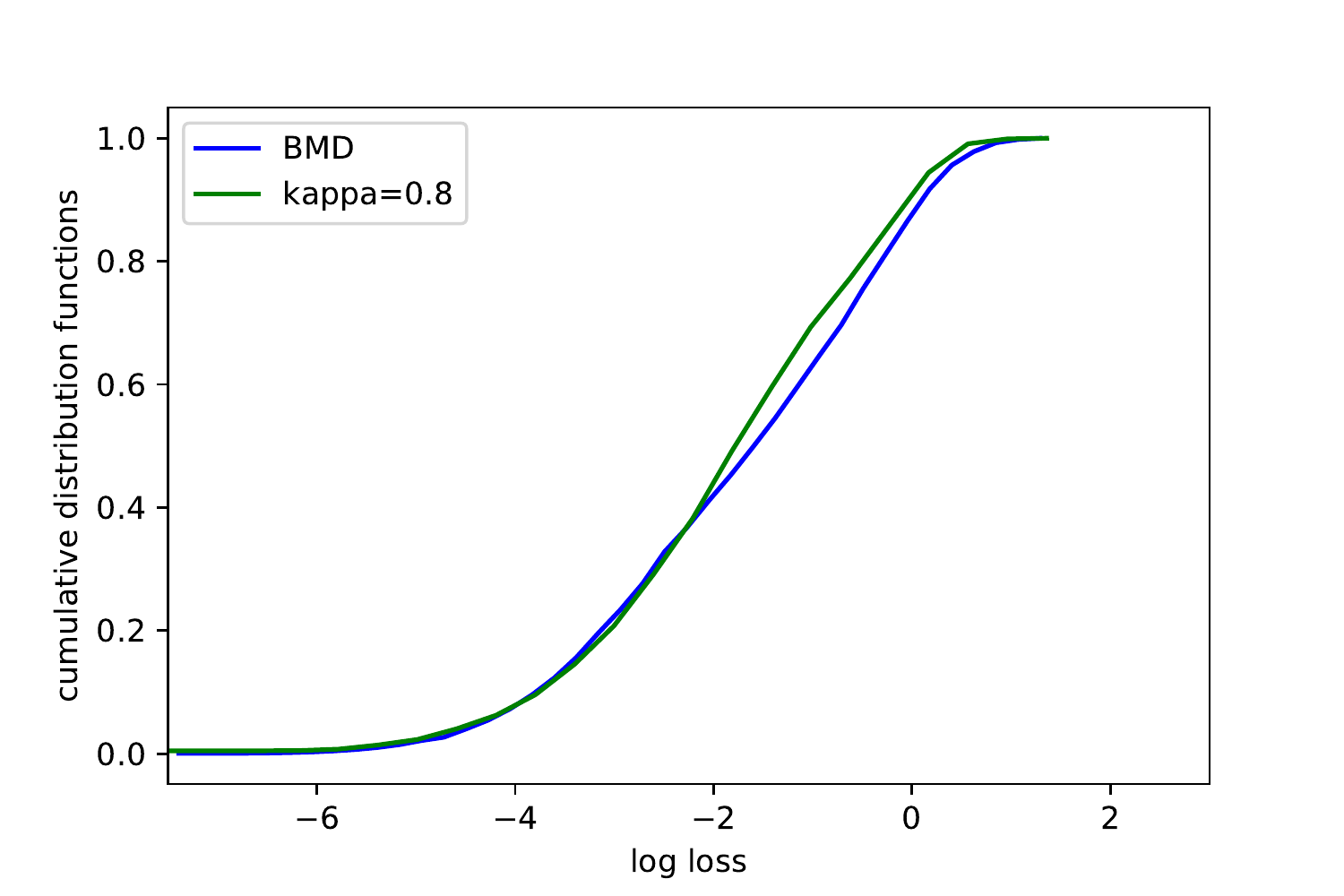}
  \caption{$\varkappa$ = 0.8}
\end{subfigure}
\caption{The CDFs of the BMD solution and the STS solutions under different robustness levels $\varkappa$ for the Adult dataset after 80000 iterations.}
\label{fig:duchi_r}
\end{figure}

\section{Acknowledgements}
Mert G\"urb\"uzbalaban's and Landi Zhu's research are supported in part by the grants Office of Naval Research Award Number N00014-21-1-2244, National Science Foundation (NSF) CCF-1814888, NSF DMS-2053485, NSF DMS-1723085.
\bibliography{multi}

\begin{thebibliography}{59}
\providecommand{\natexlab}[1]{#1}
\providecommand{\url}[1]{\texttt{#1}}
\expandafter\ifx\csname urlstyle\endcsname\relax
  \providecommand{\doi}[1]{doi: #1}\else
  \providecommand{\doi}{doi: \begingroup \urlstyle{rm}\Url}\fi

\bibitem[Allen-Zhu and Hazan(2016)]{allen2016variance}
Zeyuan Allen-Zhu and Elad Hazan.
\newblock Variance reduction for faster non-convex optimization.
\newblock In \emph{International conference on machine learning}, pages
  699--707, 2016.

\bibitem[Artzner et~al.(1999)Artzner, Delbaen, Eber, and Heath]{ADEH:1999}
P.~Artzner, F.~Delbaen, J.-M. Eber, and D.~Heath.
\newblock Coherent measures of risk.
\newblock \emph{Mathematical Finance}, 9:\penalty0 203--228, 1999.

\bibitem[Baker et~al.(2008)Baker, Schubert, and Faber]{BAKER2008253}
Jack~W. Baker, Matthias Schubert, and Michael~H. Faber.
\newblock On the assessment of robustness.
\newblock \emph{Structural Safety}, 30\penalty0 (3):\penalty0 253 -- 267, 2008.

\bibitem[Br{\'e}zis(1971)]{brezis1971monotonicity}
H.~Br{\'e}zis.
\newblock Monotonicity methods in {H}ilbert spaces and some applications to
  nonlinear partial differential equations.
\newblock In \emph{Contributions to Nonlinear Functional Analysis}, pages
  101--156. Elsevier, 1971.

\bibitem[Bubeck(2015)]{bubeck}
Sébastien Bubeck.
\newblock Convex optimization: Algorithms and complexity.
\newblock \emph{Foundations and Trends® in Machine Learning}, 8\penalty0
  (3-4):\penalty0 231--357, 2015.
\newblock ISSN 1935-8237.
\newblock \doi{10.1561/2200000050}.
\newblock URL \url{http://dx.doi.org/10.1561/2200000050}.

\bibitem[Daszykowski et~al.(2007)Daszykowski, Kaczmarek, Heyden, and
  Walczak]{DASZYKOWSKI2007203}
M.~Daszykowski, K.~Kaczmarek, Y.~Vander Heyden, and B.~Walczak.
\newblock Robust statistics in data analysis — a review: Basic concepts.
\newblock \emph{Chemometrics and Intelligent Laboratory Systems}, 85\penalty0
  (2):\penalty0 203 -- 219, 2007.

\bibitem[Davis and Drusvyatskiy(2019)]{davis-dima}
Damek Davis and Dmitriy Drusvyatskiy.
\newblock Stochastic model-based minimization of weakly convex functions.
\newblock \emph{SIAM Journal on Optimization}, 29\penalty0 (1):\penalty0
  207--239, 2019.
\newblock \doi{10.1137/18M1178244}.
\newblock URL \url{https://doi.org/10.1137/18M1178244}.

\bibitem[Dentcheva et~al.(2017)Dentcheva, Penev, and
  Ruszczy{\'n}ski]{dentcheva2017statistical}
D.~Dentcheva, S.~Penev, and A.~Ruszczy{\'n}ski.
\newblock Statistical estimation of composite risk functionals and risk
  optimization problems.
\newblock \emph{Annals of the Institute of Statistical Mathematics},
  69\penalty0 (4):\penalty0 737--760, 2017.

\bibitem[Dua and Graff(2017)]{Dua:2019}
Dheeru Dua and Casey Graff.
\newblock {UCI} {M}achine {L}earning {R}epository, 2017.
\newblock URL \url{http://archive.ics.uci.edu/ml}.

\bibitem[Duchi and Ruan(2018)]{duchi2018stochastic}
J.~C. Duchi and F.~Ruan.
\newblock Stochastic methods for composite and weakly convex optimization
  problems.
\newblock \emph{SIAM Journal on Optimization}, 28\penalty0 (4):\penalty0
  3229--3259, 2018.

\bibitem[Duchi and Namkoong(2018)]{duchi2018learning}
John Duchi and Hongseok Namkoong.
\newblock Learning models with uniform performance via distributionally robust
  optimization.
\newblock \emph{arXiv preprint arXiv:1810.08750}, 2018.

\bibitem[Ermoliev(1976)]{ermoliev1976methods}
Yu.~M. Ermoliev.
\newblock \emph{Methods of Stochastic Programming}.
\newblock Nauka, Moscow, 1976.

\bibitem[Ermoliev and Norkin(2013)]{ermoliev2013sample}
Yu.~M. Ermoliev and V.~I. Norkin.
\newblock Sample average approximation method for compound stochastic
  optimization problems.
\newblock \emph{SIAM Journal on Optimization}, 23\penalty0 (4):\penalty0
  2231--2263, 2013.

\bibitem[Esfahani and Kuhn(2018)]{esfahani2018data}
Peyman~Mohajerin Esfahani and Daniel Kuhn.
\newblock Data-driven distributionally robust optimization using the
  {W}asserstein metric: Performance guarantees and tractable reformulations.
\newblock \emph{Mathematical Programming}, 171\penalty0 (1-2):\penalty0
  115--166, 2018.

\bibitem[F{\"o}llmer and Schied(2011)]{follmer2011stochastic}
H.~F{\"o}llmer and A.~Schied.
\newblock \emph{Stochastic Finance: An Introduction in Discrete Time}.
\newblock Walter de Gruyter, 2011.

\bibitem[Foster et~al.(2018)Foster, Sekhari, and Sridharan]{nonconvex-learning}
Dylan~J Foster, Ayush Sekhari, and Karthik Sridharan.
\newblock Uniform convergence of gradients for non-convex learning and
  optimization.
\newblock In S.~Bengio, H.~Wallach, H.~Larochelle, K.~Grauman, N.~Cesa-Bianchi,
  and R.~Garnett, editors, \emph{Advances in Neural Information Processing
  Systems 31}, pages 8745--8756. Curran Associates, Inc., 2018.

\bibitem[Gao and Kleywegt(2016)]{gao2016distributionally}
Rui Gao and Anton~J. Kleywegt.
\newblock Distributionally robust stochastic optimization with wasserstein
  distance.
\newblock \emph{arXiv preprint arXiv:1604.02199}, 2016.

\bibitem[Gao et~al.(2017)Gao, Chen, and Kleywegt]{gao2017wasserstein}
Rui Gao, Xi~Chen, and Anton~J Kleywegt.
\newblock Wasserstein distributional robustness and regularization in
  statistical learning.
\newblock \emph{arXiv preprint arXiv:1712.06050}, 2017.

\bibitem[Ghadimi et~al.(2020)Ghadimi, Ruszczy{\'n}ski, and
  Wang]{ghadimi2018single}
S.~Ghadimi, A.~Ruszczy{\'n}ski, and M.~Wang.
\newblock A single time-scale stochastic approximation method for nested
  stochastic optimization.
\newblock \emph{SIAM J. on Optimization}, 2020.
\newblock Accepted for publication (arXiv preprint 1812.01094).

\bibitem[Ghadimi and Lan(2013)]{ghadimi2013optimal}
Saeed Ghadimi and Guanghui Lan.
\newblock Optimal stochastic approximation algorithms for strongly convex
  stochastic composite optimization, ii: shrinking procedures and optimal
  algorithms.
\newblock \emph{SIAM Journal on Optimization}, 23\penalty0 (4):\penalty0
  2061--2089, 2013.

\bibitem[Goodfellow et~al.(2016)Goodfellow, Bengio, and
  Courville]{goodfellow2016deep}
Ian Goodfellow, Yoshua Bengio, and Aaron Courville.
\newblock \emph{Deep learning}.
\newblock MIT Press, 2016.

\bibitem[Goodfellow et~al.(2014)Goodfellow, Shlens, and
  Szegedy]{goodfellow2014explaining}
Ian~J Goodfellow, Jonathon Shlens, and Christian Szegedy.
\newblock Explaining and harnessing adversarial examples.
\newblock \emph{arXiv preprint arXiv:1412.6572}, 2014.

\bibitem[Hastie et~al.(2015)Hastie, Tibshirani, and
  Wainwright]{hastie2015statistical}
Trevor Hastie, Robert Tibshirani, and Martin Wainwright.
\newblock \emph{Statistical learning with sparsity: the {L}asso and
  generalizations}.
\newblock CRC press, 2015.

\bibitem[Jain et~al.(2018)Jain, Kakade, Kidambi, Netrapalli, and
  Sidford]{jain2018accelerating}
Prateek Jain, Sham~M Kakade, Rahul Kidambi, Praneeth Netrapalli, and Aaron
  Sidford.
\newblock Accelerating stochastic gradient descent for least squares
  regression.
\newblock In \emph{Conference On Learning Theory}, pages 545--604, 2018.

\bibitem[Kalogerias and Powell(2018)]{kalogerias2018recursive}
Dionysios~S Kalogerias and Warren~B Powell.
\newblock Recursive optimization of convex risk measures: Mean-semideviation
  models.
\newblock Technical report, 2018.

\bibitem[Krizhevsky(2009)]{Krizhevsky09learningmultiple}
Alex Krizhevsky.
\newblock Learning multiple layers of features from tiny images.
\newblock Technical report, 2009.

\bibitem[Kuhn et~al.(2019)Kuhn, Esfahani, Nguyen, and
  Shafieezadeh-Abadeh]{kuhn2019wasserstein}
Daniel Kuhn, Peyman~Mohajerin Esfahani, Viet~Anh Nguyen, and Soroosh
  Shafieezadeh-Abadeh.
\newblock Wasserstein distributionally robust optimization: Theory and
  applications in machine learning.
\newblock In \emph{Operations Research \& Management Science in the Age of
  Analytics}, pages 130--166. INFORMS, 2019.

\bibitem[Kurakin et~al.(2016)Kurakin, Goodfellow, and
  Bengio]{kurakin2016adversarial}
Alexey Kurakin, Ian Goodfellow, and Samy Bengio.
\newblock Adversarial machine learning at scale.
\newblock \emph{arXiv preprint arXiv:1611.01236}, 2016.

\bibitem[Kushner and Yin(2003)]{kushner2003stochastic}
H.~Kushner and G.~G. Yin.
\newblock \emph{Stochastic Approximation Algorithms and Applications}.
\newblock Springer, New York, 2003.

\bibitem[LeCun et~al.(2010)LeCun, Cortes, and Burges]{lecun2010mnist}
Yann LeCun, Corinna Cortes, and CJ~Burges.
\newblock {MNIST} handwritten digit database.
\newblock \emph{ATT Labs [Online]. Available:
  http://yann.lecun.com/exdb/mnist}, 2, 2010.

\bibitem[{Li} et~al.(2019){Li}, {Zhu}, {Man-Cho So}, and
  {Lee}]{lee-weakly-convex}
Xiao {Li}, Zhihui {Zhu}, Anthony {Man-Cho So}, and Jason~D {Lee}.
\newblock {Incremental Methods for Weakly Convex Optimization}.
\newblock \emph{arXiv e-prints}, art. arXiv:1907.11687, July 2019.

\bibitem[Madry et~al.(2017)Madry, Makelov, Schmidt, Tsipras, and
  Vladu]{madry2017towards}
Aleksander Madry, Aleksandar Makelov, Ludwig Schmidt, Dimitris Tsipras, and
  Adrian Vladu.
\newblock Towards deep learning models resistant to adversarial attacks.
\newblock \emph{arXiv preprint arXiv:1706.06083}, 2017.

\bibitem[Majewski et~al.(2018)Majewski, Miasojedow, and
  Moulines]{majewski2018analysis}
S.~Majewski, B.~Miasojedow, and E.~Moulines.
\newblock Analysis of nonsmooth stochastic approximation: the differential
  inclusion approach.
\newblock \emph{arXiv preprint arXiv:1805.01916}, 2018.

\bibitem[Mehrotra and Zhang(2014)]{mehrotra}
Sanjay Mehrotra and He~Zhang.
\newblock Models and algorithms for distributionally robust least squares
  problems.
\newblock \emph{Mathematical Programming}, 146\penalty0 (1):\penalty0 123--141,
  2014.
\newblock \doi{10.1007/s10107-013-0681-9}.
\newblock URL \url{https://doi.org/10.1007/s10107-013-0681-9}.

\bibitem[Mifflin(1977)]{mifflin1977semismooth}
R.~Mifflin.
\newblock Semismooth and semiconvex functions in constrained optimization.
\newblock \emph{SIAM Journal on Control and Optimization}, 15\penalty0
  (6):\penalty0 959--972, 1977.

\bibitem[Mikhalevich et~al.(1987)Mikhalevich, Gupal, and
  Norkin]{mikhalevich1987nonconvex}
V.~S. Mikhalevich, A.~M. Gupal, and V.~I. Norkin.
\newblock \emph{Nonconvex Optimization Methods}.
\newblock Nauka, Moscow, 1987.

\bibitem[Namkoong and Duchi(2016)]{namkoong2016stochastic}
Hongseok Namkoong and John~C Duchi.
\newblock Stochastic gradient methods for distributionally robust optimization
  with f-divergences.
\newblock In \emph{Advances in neural information processing systems}, pages
  2208--2216, 2016.

\bibitem[Norkin(1980)]{norkin1980generalized}
V.~I. Norkin.
\newblock Generalized-differentiable functions.
\newblock \emph{Cybernetics and Systems Analysis}, 16\penalty0 (1):\penalty0
  10--12, 1980.

\bibitem[Ogryczak and Ruszczy\'nski(1999)]{OR:1999}
W.~Ogryczak and A.~Ruszczy\'nski.
\newblock From stochastic dominance to mean--risk models: semideviations as
  risk measures.
\newblock \emph{European Journal of Operational Research}, 116:\penalty0
  33--50, 1999.

\bibitem[Ogryczak and Ruszczy\'nski(2001)]{OR:2001}
W.~Ogryczak and A.~Ruszczy\'nski.
\newblock On consistency of stochastic dominance and mean--semideviation
  models.
\newblock \emph{Mathematical Programming}, 89:\penalty0 217--232, 2001.

\bibitem[Postek et~al.(2016)Postek, den Hertog, and
  Melenberg]{tractable-review}
Krzysztof Postek, Dick den Hertog, and Bertrand Melenberg.
\newblock Computationally tractable counterparts of distributionally robust
  constraints on risk measures.
\newblock \emph{SIAM Review}, 58\penalty0 (4):\penalty0 603--650, 2016.
\newblock \doi{10.1137/151005221}.
\newblock URL \url{https://doi.org/10.1137/151005221}.

\bibitem[Robbins and Monro(1951)]{robbins1951stochastic}
Herbert Robbins and Sutton Monro.
\newblock A stochastic approximation method.
\newblock \emph{The annals of mathematical statistics}, pages 400--407, 1951.

\bibitem[Ruszczy\'{n}ski and Shapiro(2006)]{RuSh:2006a}
A.~Ruszczy\'{n}ski and A.~Shapiro.
\newblock Optimization of convex risk functions.
\newblock \emph{Mathematics of Operations Research}, 31:\penalty0 433--452,
  2006.

\bibitem[Ruszczy{\'n}ski(2020)]{Rusz-OptLet}
Andrzej Ruszczy{\'n}ski.
\newblock Convergence of a stochastic subgradient method with averaging for
  nonsmooth nonconvex constrained optimization.
\newblock \emph{Optimization Letters}, 2020.
\newblock \doi{10.1007/s11590-020-01537-8}.
\newblock URL \url{https://doi.org/10.1007/s11590-020-01537-8}.

\bibitem[Ruszczy\'nski(2020)]{ruszczynski2020stochastic}
Andrzej Ruszczy\'nski.
\newblock A stochastic subgradient method for nonsmooth nonconvex multi-level
  composition optimization.
\newblock \emph{arXiv preprint arXiv:2001.10669}, 2020.

\bibitem[Ruszczy{\'{n}}ski and Shapiro(2006)]{ruszczynski2006}
Andrzej Ruszczy{\'{n}}ski and Alexander Shapiro.
\newblock \emph{Optimization of Risk Measures}, pages 119--157.
\newblock Springer London, London, 2006.

\bibitem[Seidman et~al.(2020)Seidman, Fazlyab, Preciado, and
  Pappas]{seidman2020robust}
Jacob~H. Seidman, Mahyar Fazlyab, Victor~M. Preciado, and George~J. Pappas.
\newblock Robust deep learning as optimal control: Insights and convergence
  guarantees.
\newblock \emph{Proceedings of Machine Learning Research vol 1}, 1:\penalty0
  14, 2020.

\bibitem[Shafieezadeh~Abadeh et~al.(2015)Shafieezadeh~Abadeh,
  Mohajerin~Esfahani, and Kuhn]{robust-log-reg}
Soroosh Shafieezadeh~Abadeh, Peyman~Mohajerin Mohajerin~Esfahani, and Daniel
  Kuhn.
\newblock Distributionally robust logistic regression.
\newblock In C.~Cortes, N.~D. Lawrence, D.~D. Lee, M.~Sugiyama, and R.~Garnett,
  editors, \emph{Advances in Neural Information Processing Systems 28}, pages
  1576--1584. Curran Associates, Inc., 2015.
\newblock URL
  \url{http://papers.nips.cc/paper/5745-distributionally-robust-logistic-regression.pdf}.

\bibitem[Shalev-Shwartz and Ben-David(2014)]{shalev2014understanding}
Shai Shalev-Shwartz and Shai Ben-David.
\newblock \emph{Understanding machine learning: From theory to algorithms}.
\newblock Cambridge University Press, 2014.

\bibitem[Shapiro et~al.(2009)Shapiro, Dentcheva, and
  Ruszczy{\'n}ski]{shapiro2009lectures}
A.~Shapiro, D.~Dentcheva, and A.~Ruszczy{\'n}ski.
\newblock \emph{Lectures on Stochastic Programming: Modeling and Theory}.
\newblock SIAM, Philadelphia, 2009.

\bibitem[Sinha et~al.(2017)Sinha, Namkoong, and Duchi]{sinha2017certifying}
Aman Sinha, Hongseok Namkoong, and John Duchi.
\newblock Certifying some distributional robustness with principled adversarial
  training.
\newblock \emph{arXiv preprint arXiv:1710.10571}, 2017.

\bibitem[Soma and Yoshida(2020)]{soma2020statistical}
Tasuku Soma and Yuichi Yoshida.
\newblock Statistical learning with conditional value at risk.
\newblock \emph{arXiv preprint arXiv:2002.05826}, 2020.

\bibitem[Takeda and Kanamori(2009)]{robust-cvar}
Akiko Takeda and Takafumi Kanamori.
\newblock A robust approach based on conditional value-at-risk measure to
  statistical learning problems.
\newblock \emph{European Journal of Operational Research}, 198\penalty0
  (1):\penalty0 287 -- 296, 2009.
\newblock ISSN 0377-2217.
\newblock \doi{https://doi.org/10.1016/j.ejor.2008.07.027}.
\newblock URL
  \url{http://www.sciencedirect.com/science/article/pii/S0377221708005614}.

\bibitem[Teo et~al.(2010)Teo, Vishwanthan, Smola, and Le]{JMLR:v11:teo10a}
Choon~Hui Teo, S.V.N. Vishwanthan, Alex~J. Smola, and Quoc~V. Le.
\newblock Bundle methods for regularized risk minimization.
\newblock \emph{Journal of Machine Learning Research}, 11\penalty0
  (10):\penalty0 311--365, 2010.
\newblock URL \url{http://jmlr.org/papers/v11/teo10a.html}.

\bibitem[Vapnik(2013)]{vapnik2013nature}
Vladimir Vapnik.
\newblock \emph{The nature of statistical learning theory}.
\newblock Springer science \& business media, 2013.

\bibitem[Wang et~al.(2017{\natexlab{a}})Wang, Fang, and
  Liu]{wang2017stochastic}
M.~Wang, E.~X. Fang, and B.~Liu.
\newblock Stochastic compositional gradient descent: Algorithms for minimizing
  compositions of expected-value functions.
\newblock \emph{Mathematical Programming}, 161\penalty0 (1-2):\penalty0
  419--449, 2017{\natexlab{a}}.

\bibitem[Wang et~al.(2017{\natexlab{b}})Wang, Liu, and Fang]{WaLiFa17}
M.~Wang, J.~Liu, and E.~X. Fang.
\newblock Accelerating stochastic composition optimization.
\newblock \emph{Journal of Machine Learning Research}, 18:\penalty0 1--23,
  2017{\natexlab{b}}.

\bibitem[Yang et~al.(2019)Yang, Wang, and Fang]{yang2019multi}
S~Yang, M.~Wang, and E.~X. Fang.
\newblock Multilevel stochastic gradient methods for nested composition
  optimization.
\newblock \emph{SIAM Journal on Optimization}, 29\penalty0 (1):\penalty0
  616--659, 2019.

\bibitem[Zhang et~al.(2019)Zhang, Zhang, Lu, Zhu, and Dong]{zhang2019you}
Dinghuai Zhang, Tianyuan Zhang, Yiping Lu, Zhanxing Zhu, and Bin Dong.
\newblock You only propagate once: Accelerating adversarial training via
  maximal principle.
\newblock In \emph{Advances in Neural Information Processing Systems}, pages
  227--238, 2019.

\end{thebibliography}

\end{document}